\newtheorem{remark}{Remark}[section]
\title{Robust residual-based {\it a posteriori} error estimators for mixed finite element methods for fourth order elliptic singularly perturbed problems}
\author{
    Shaohong Du\thanks{
        School of Mathematics and Statistics, Chongqing Jiaotong University, Chongqing 400074, China. E-mail: {duzheyan.student@sina.com}.
    }
    \and
    Runchang Lin\thanks{
        Department of Mathematics and Physics, Texas A\&M International University (TAMIU), Laredo, Texas 78041, USA. E-mail: {rlin@tamiu.edu}.
        This research is partially supported by the US National Science Foundation under grant DMS-1217268, the National Natural Science Foundation of China under grant 11428103, and a University Research Development Award of TAMIU.
    }
    \and
    Zhimin Zhang\thanks{
        Beijing Computational Science Research Center, Beijing 100193, China. E-mail: {zmzhang@csrc.ac.cn}.
        Department of Mathematics, Wayne State University, Detroit, MI 48202, USA. E-mail: {ag7761@wayne.edu}.
        This research is partially supported by the US National Science Foundation through grant DMS-1419040 and by the National Natural Science Foundation of China under grants 11471031, 91430216, and U1530401.
    }
}
\begin{document}

\maketitle
\renewcommand{\thefootnote}{\fnsymbol{footnote}}

\begin{abstract}
We consider mixed finite element approximation of a singularly
perturbed fourth-order elliptic problem with two different boundary
conditions, and present a new measure of the error, whose components
are balanced with respect to the perturbation parameter. Robust
residual-based {\it a posteriori} estimators for the new measure are
obtained, which are achieved via a novel analytical technique based on
an approximation result. Numerical examples are presented to
validate our theory.
\end{abstract}

\begin{it} Key words. \end{it} fourth order elliptic singularly perturbed problems,
mixed finite element methods, a new measure of the error, robust
residual-based {\it a posteriori} error estimators

\begin{it} AMS subject classifications.\end{it} 65N15, 65N30, 65J15

\section {Introduction}
Let $\Omega$ be a bounded polygonal or polyhedral domain with
Lipschitz boundary $\Gamma=\partial\Omega$ in ${\mathbb{R}}^{d},
d=2\ {\rm or}\ 3$. Consider the following fourth-order singularly
perturbed elliptic equation
\begin{equation}\label{PDE1}
\varepsilon^{2}\triangle^{2}u-\triangle u=f\ \ \ {\rm in}\ \ \Omega
\end{equation}
with boundary conditions
\begin{equation}\label{Boundar condition1}
u=\triangle u=0 \ \ \ {\rm on}\ \ \Gamma
\end{equation}
or
\begin{equation}\label{Boundary condition2}
u=\displaystyle\frac{\partial u}{\partial{\bf n}}=0 \ \ \ {\rm on}\
\ \Gamma,
\end{equation}
where $f\in L^{2}(\Omega)$, $\triangle$ is the standard Laplace
operator, and $\frac{\partial}{\partial{\bf n}}$ denotes the outer
normal derivative on $\Gamma$. In two dimensional cases, the boundary value
problems (\ref{PDE1})-(\ref{Boundar condition1}) and
(\ref{PDE1})-(\ref{Boundary condition2}) arise in the context of
linear elasticity of thin bucking plate with $u$ representing the
displacement of the plate. The dimensionless positive parameter
$\varepsilon$, assumed to be small (i.e., $\varepsilon\ll1$), is
defined by
\begin{equation*}
\varepsilon=\displaystyle\frac{t^{3}E}{12(1-\nu^{2})l^{2}T},
\end{equation*}
where, $t$ is the thickness of the plate, $E$ is the Young modulus
of the elastic material, $\nu$ is the Poisson ratio, $l$ is the
characteristic diameter of the plate, and $T$ is the absolute value
of the density of the isotropic stretching force applied at the end
of the plate \cite{FAN}. In three dimensions, problems
(\ref{PDE1})-(\ref{Boundar condition1}) and
(\ref{PDE1})-(\ref{Boundary condition2}) can be a gross
simplification of the stationary Cahn-Hilliard equations with
$\varepsilon$ being the length of the transition region of phase
separation.

Conforming, nonconforming, and mixed finite element methods for fourth order problem have been extensively studied
\cite{AWANOU, BABUSKA, BGS10, BREZZI, DANUMJAYA, FALK, MORLEY, NILSSEN, SEMPER, SHI, WANG, WANG07, WANG06}. However, its {\em a posteriori} error estimation is a much less explored topic. Even for the Kirchhoff plate bending
problem, the finite element {\it a posteriori} error analysis is
still in its infancy. In 2007, Beir$\tilde{\rm a}$o et al.
\cite{BEIRAO} developed an estimator for the Morley element
approximation using the standard technique for nonconforming
element. Later, Hu et al. \cite{HU} improved the methods of
\cite{BEIRAO,WANGZ,WANG08} by dropping two edge jump terms in both
the energy norm of the error and the estimator, and by dropping the
normal component in the estimators of \cite{BEIRAO,WANGZ}.
Therefore, a naive extension of the estimators in
\cite{BEIRAO,WANGZ,WANG08} to the current problem may probably not
be robust in the parameter $\varepsilon$.

Designing robust {\it a posteriori} estimators is challenging,
especially for singularly perturbed problems, since constants
occurring in estimators usually depend on the small perturbation
parameter $\varepsilon$. This motivates us to think about the
question: What method and norm are suitable for the singularly
perturbed fourth-order elliptic problem? In the literature,
$(\varepsilon^{2}\|\triangle u\|_{L^{2}(\Omega)}^{2}+\|\nabla
u\|_{L^{2}(\Omega)}^{2})^{1/2}$ is a widely used measure for the
primal weak formulation. We recall {\it a priori} estimates in
\cite{Guzman} for boundary condition (\ref{Boundary condition2}) and
convex domain $\Omega$:
\begin{equation}\label{maysp}
\|u\|_{H^{s}(\Omega)}\leq \displaystyle
C\varepsilon^{\frac{3}{2}-s}\|f\|_{L^{2}(\Omega)},\ \ \ {\rm for}\
s=2,3.
\end{equation}
Hereafter, we use $C>0$ to denote a generic constant independent of
$\varepsilon$ with different value at different occurrence. This
leads to
\begin{equation*}
\varepsilon\|\triangle u\|_{L^{2}(\Omega)}\leq
C\varepsilon\|u\|_{H^{2}(\Omega)}\leq
C\varepsilon^{1/2}\|f\|_{L^{2}(\Omega)}.
\end{equation*}

Multiply both sides of (\ref{PDE1}) by $u$, and then integrate over
$\Omega$. Using integration by parts and boundary condition
(\ref{Boundar condition1}) or (\ref{Boundary condition2}), we have
from the Poincar\'{e} inequality that
\begin{equation*}
\begin{array}{lll}
\varepsilon^{2}(\triangle u,\triangle u)+(\nabla u,\nabla
u)&=&(f,u)\leq\|f\|_{L^{2}(\Omega)}\|u\|_{L^{2}(\Omega)}\vspace{2mm}\\
&\leq&C\|f\|_{L^{2}(\Omega)}\|\nabla u\|_{L^{2}(\Omega)}.
\end{array}
\end{equation*}
As a consequence, $\|\nabla u\|_{L^{2}(\Omega)}\leq
C\|f\|_{L^{2}(\Omega)}$. This suggests that the two components of
$\varepsilon\|\triangle u\|_{L^{2}(\Omega)}+\|\nabla
u\|_{L^{2}(\Omega)}$ are unbalanced with respect to $\varepsilon$ if
$f\in L^{2}(\Omega)$.

Furthermore, if we set $\psi=-\triangle u$, then problem (\ref{PDE1}) is written as
\begin{equation*}
-\varepsilon^{2}\triangle\psi+\psi=f.
\end{equation*}
Note that $\psi$ has boundary layer, but $u$ usually does not have one. Thus,
\begin{equation*}
(\varepsilon^{2}\|\triangle u\|_{L^{2}(\Omega)}^{2}+\|\nabla
u\|_{L^{2}(\Omega)}^{2})^{1/2}=(\varepsilon^{2}\|\psi\|_{L^{2}(\Omega)}^{2}+|u|_{H^{1}(\Omega)}^{2})^{1/2}
\end{equation*}
approaches to $|u|_{H^{1}(\Omega)}$ as
$\varepsilon\rightarrow0^{+}$, which fails to describe the layer of
$\psi$.

An observation of the two decoupled equations
$-\varepsilon^{2}\triangle\psi+\psi=f$ and $-\triangle u=\psi$
suggests that the two measures
$(\varepsilon^{2}\|\nabla\psi\|_{L^{2}(\Omega)}^{2}+\|\psi\|_{L^{2}(\Omega)}^{2}+|u|_{H^{1}(\Omega)}^{2})^{1/2}$
and
$(\varepsilon^{2}\|\nabla\psi\|_{L^{2}(\Omega)}^{2}+\|\psi\|_{L^{2}(\Omega)}^{2})^{1/2}$
can portray the layer of $\psi$ and the first and second derivatives
of $u$. From (\ref{maysp}), we have
\begin{equation*}
\varepsilon\|\nabla \psi\|_{L^{2}(\Omega)}\leq
C\varepsilon\|u\|_{H^{3}(\Omega)}\leq
C\varepsilon^{-1/2}\|f\|_{L^{2}(\Omega)}.
\end{equation*}
Notice that
\begin{equation*}
\|\psi\|_{L^{2}(\Omega)}\leq C\|u\|_{H^{2}(\Omega)}\leq
C\varepsilon^{-1/2}\|f\|_{L^{2}(\Omega)}.
\end{equation*}
If $f\in L^{2}(\Omega)$, then
$\varepsilon\|\nabla\psi\|_{L^{2}(\Omega)}$ and
$\|\psi\|_{L^{2}(\Omega)}$ are balanced with respect to
$\varepsilon$ for the boundary condition (\ref{Boundary
condition2}). These inspire us to think about the mixed finite
element method for the problem (\ref{PDE1}) and the two
aforementioned measures.

However, the mixed finite element method for the problem
(\ref{PDE1}) is a much less explored topic, since there exist some
special problems such as the fourth order problem, where attempts at
using the results of Brezzi and Babu\v{s}ka were not entirely
successful since not all of the stability conditions were satisfied,
cf. \cite{BABUSKA} and the reference therein. To overcome this
difficulty, Falk et al. developed abstract results from which
optimal error estimates for these (biharmonic equation) and other
problems could be derived (\cite{FALK,BREZZI}). However, it is not
easy to extend the results of \cite{FALK} to the problem
(\ref{PDE1}), because of the existence of an extra term and the
singular perturbation parameter $\varepsilon$. Recently, for a
fourth order reaction diffusion equation, the error estimates of its
mixed finite element method was derived in \cite{DANUMJAYA}. We
refer to \cite{CH,GU} about the {\em a posteriori} estimation of
Ciarlet-Raviart methods for the biharmonic equation.

In this work, our goal is to develop robust residual-type {\it a
posteriori estimators for a} mixed finite element method for the
problem (\ref{PDE1}) in the two aforementioned measures. The main
difficulty lies in the fact that the boundary condition
(\ref{Boundary condition2}) does not include any information on the
immediate variable $\psi$. In order to overcome this difficulty, we
develop a novel technique to analyze residual-based {\it a
posteriori} error estimator. The key idea is to replace a function
$v\in H^1(\Omega)$ (such that $-\varepsilon^{2}\triangle v+v\neq0$)
without boundary restriction by a function $\tilde{v}\in
H^1_0(\Omega)$ with boundary restriction, which catches at least
``$\gamma$ times" of $v$ in the $\varepsilon$-weighted energy norm
(see Lemma 3.3 below). Combining this novel design with standard
tools, we develop uniformly robust residual-type {\it a posteriori}
estimators with respect to the singularly perturbed parameter
$\varepsilon$ in the two aforementioned measures. We refer to the
reference \cite{Lin} on balanced norm for mixed formulation for
singularly perturbed reaction-diffusion problems.

The rest of this paper is organized as follows: In Section 2, we
introduce mixed weak formulations and some notations, and prove an
equivalent relation between the primal weak solution and the weak
solution determined by its mixed formulation. Some preliminary
results are provided in Section 3. Residual-type {\it a posteriori}
estimators are developed and proven to be reliable in Section 4. An
efficient lower bound is proved in Section 5. In
Section 6, numerical tests are provided to support our theory.

\section{The mixed weak formulations}
Setting $\psi=-\triangle u$, and employing the boundary condition
(\ref{Boundar condition1}), we attain the Ciarlet-Raviart mixed
problem $P_{1}$:
\begin{equation}\label{mixedproblem1}
 \left \{ \begin{array}{ll}
  -\varepsilon^{2}\triangle\psi+\psi=f\ \ \  & \mbox{in}\ \ \Omega\\
 \hspace{8.0mm}\ \ -\triangle u=\psi & \mbox{in}\ \Omega\\
 \hspace{8.0mm}\ u=\psi=0 & \mbox{on}\ \Gamma.
 \end{array}\right.
\end{equation}
Similarly, using the boundary condition (\ref{Boundary condition2}),
we arrive at the Ciarlet-Raviart mixed formulation $P_{2}$:
\begin{equation}\label{mixedproblem2}
 \left \{ \begin{array}{ll}
  -\varepsilon^{2}\triangle\psi+\psi=f\ \ \  & \mbox{in}\ \ \Omega\\
 \hspace{8.0mm}\ \ -\triangle u=\psi & \mbox{in}\ \Omega\\
 \hspace{5.2mm}\ u=\displaystyle\frac{\partial u}{\partial{\bf n}}=0 & \mbox{on}\ \Gamma.
 \end{array}\right.
\end{equation}

For any bounded open subset $\omega$ of $\Omega$ with Lipschitz
boundary $\gamma$, let $L^{2}(\gamma)$ and $H^{m}(\omega)$ be the standard Lebesgue and Sobolev spaces equipped with standard norms
$\|\cdot\|_{\gamma}=\|\cdot\|_{L^{2}(\gamma)}$ and $\|\cdot\|_{m,\omega}=\|\cdot\|_{H^{m}(\omega)}$,
$m\in\mathbb{N}$ (see \cite{ADAMS} for details). Note that $H^{0}(\omega)=L^{2}(\omega)$. We denote $|\cdot|_{m,\omega}$ the
semi-norm in $H^{m}(\omega)$. Similarly, denote $(\cdot,\cdot)_{\gamma}$ and
$(\cdot,\cdot)_{\omega}$ the $L^{2}$ inner products on $\gamma$
and $\omega$, respectively. We shall omit the symbol $\Omega$ in the notations above if
$\omega=\Omega$.

The weak formulation of problem $P_{1}$ reads: find $(\psi,u)\in
H_{0}^{1}(\Omega)\times H_{0}^{1}(\Omega)$ such that
\begin{equation}\label{discrete mixed formulation P1}
\left \{\begin{array}{ll}
(\varepsilon^{2}\nabla\psi,\nabla\varphi)+(\psi,\varphi)=(f,\varphi)\ \ \  & \forall\ \varphi\in H_{0}^{1}(\Omega),\\
(\nabla u,\nabla v)=(\psi,v)\ \ \ \ \ \ \ \ & \forall\ v\in
H_{0}^{1}(\Omega).
 \end{array}\right.
\end{equation}
The weak formulation of the problem $P_{2}$ reads: find
$(\psi,u)\in H^{1}(\Omega)\times H_{0}^{1}(\Omega)$ such that
\begin{equation}\label{discrete mixed formulation P2}
\left \{\begin{array}{ll}
(\varepsilon^{2}\nabla\psi,\nabla\varphi)+(\psi,\varphi)=(f,\varphi)\ \ \  & \forall\ \varphi\in H_{0}^{1}(\Omega),\\
(\nabla u,\nabla v)=(\psi,v)\ \ \ \ \ \ \ \ & \forall\ v\in
H^{1}(\Omega).
 \end{array}\right.
\end{equation}

Note that, by the Lax-Milgram lemma, both systems (\ref{discrete
mixed formulation P1}) and (\ref{discrete mixed formulation P2})
have a unique solution. In fact, by regularity theory for elliptic
problems \cite{GRISVARD}, if $\Omega$ is convex and $f\in
H^{-1}(\Omega)$, then $u\in H^{3}(\Omega)$ and $\psi\in
H_{0}^{1}(\Omega)$. Thus (\ref{discrete mixed formulation P1}) has
solution, which is unique since its homogeneous system has only one
solution satisfying $(\psi,u)=0$. Similar conclusion can be drawn
for the system (\ref{discrete mixed formulation P2}).

It is well known that the primal weak formulation of
(\ref{PDE1})-(\ref{Boundar condition1}) is: find $\tilde{u}\in
H^{2}(\Omega)\cap H_{0}^{1}(\Omega)$ such that
\begin{equation}\label{may1}
(\varepsilon^{2}\triangle\tilde{u},\triangle
v)+(\nabla\tilde{u},\nabla v)=(f,v),\ \ \ \forall v\in
H^{2}(\Omega)\cap H_{0}^{1}(\Omega),
\end{equation}
and that the one of (\ref{PDE1})-(\ref{Boundary condition2}) is:
find $\tilde{u}\in H_{0}^{2}(\Omega)$ such that
\begin{equation}\label{may2}
(\varepsilon^{2}\triangle\tilde{u},\triangle
v)+(\nabla\tilde{u},\nabla v)=(f,v),\ \ \ \forall v\in
H_{0}^{2}(\Omega).
\end{equation}
The classical results of PDEs imply that (\ref{may1}) and
(\ref{may2}) have unique solutions (see \cite{GRISVARD}). A natural 
question is whether the $u$ determined by (\ref{discrete mixed
formulation P1}) (or (\ref{discrete mixed formulation P2})) is the
solution of (\ref{may1}) (or (\ref{may2})). In \cite{Zhang}, for
biharmonic equation on a reentrant corners polygon, a counterexample
is shown. The following theorems answer this question.

\begin{theorem}\label{mayy}
The solution $\tilde{u}$ of (\ref{may1}) and the $u$ determined by
(\ref{discrete mixed formulation P1}) are identical if and only if $u\in
H^{2}(\Omega)$.
\end{theorem}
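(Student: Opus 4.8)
The plan is to treat the two implications separately, with the forward direction immediate and the reverse direction carrying the analytical content. Necessity is trivial: by definition the primal solution $\tilde{u}$ of (\ref{may1}) lies in $H^{2}(\Omega)\cap H_{0}^{1}(\Omega)$, so if $\tilde{u}$ coincides with the $u$ determined by (\ref{discrete mixed formulation P1}), then $u$ inherits the $H^{2}$-regularity.

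For sufficiency I would assume $u\in H^{2}(\Omega)$ (recalling that $u\in H_{0}^{1}(\Omega)$ already holds) and verify that $u$ solves (\ref{may1}); uniqueness of the primal solution then forces $u=\tilde{u}$. First I would recover the strong relation between the two unknowns: testing the second equation of (\ref{discrete mixed formulation P1}) against $v\in H_{0}^{1}(\Omega)$ gives $(\nabla u,\nabla v)=(\psi,v)$, and since $u\in H^{2}(\Omega)$ Green's identity yields $(\nabla u,\nabla v)=-(\triangle u,v)$, the boundary term vanishing because $v\in H_{0}^{1}(\Omega)$. Hence $-\triangle u=\psi$ in $L^{2}(\Omega)$.

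Next I would take an arbitrary $v\in H^{2}(\Omega)\cap H_{0}^{1}(\Omega)$ and evaluate the first term on the left of (\ref{may1}). Substituting $\triangle u=-\psi$ and integrating by parts once more—this time exploiting $\psi\in H_{0}^{1}(\Omega)$, so that $\psi=0$ on $\Gamma$ removes the boundary contribution—gives $(\varepsilon^{2}\triangle u,\triangle v)=\varepsilon^{2}(\nabla\psi,\nabla v)$. Adding $(\nabla u,\nabla v)=(\psi,v)$ from the second mixed equation, the left-hand side of (\ref{may1}) becomes $\varepsilon^{2}(\nabla\psi,\nabla v)+(\psi,v)$, which is precisely the left-hand side of the first equation of (\ref{discrete mixed formulation P1}) tested against $\varphi=v\in H_{0}^{1}(\Omega)$, and therefore equals $(f,v)$. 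Thus $u$ satisfies (\ref{may1}) for every admissible test function, and uniqueness concludes the argument.

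The only place where the hypothesis $u\in H^{2}(\Omega)$ is genuinely used—and hence the crux of the statement—is that without it $\triangle u$ need not lie in $L^{2}(\Omega)$: neither the identification $\psi=-\triangle u$ nor the pairing $(\varepsilon^{2}\triangle u,\triangle v)$ would be meaningful, and $u$ would fail to be an admissible member of the primal test space $H^{2}(\Omega)\cap H_{0}^{1}(\Omega)$. The counterexample recalled from \cite{Zhang} shows that this regularity can truly fail on nonconvex domains, so the $H^{2}$ requirement is essential rather than merely technical.
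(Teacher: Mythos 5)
Your proposal is correct and follows essentially the same route as the paper: recover $-\triangle u=\psi$ in $L^{2}(\Omega)$ from the second mixed equation (via density of $H_{0}^{1}(\Omega)$ in $L^{2}(\Omega)$), integrate by parts using $\psi\in H_{0}^{1}(\Omega)$ to get $(\varepsilon^{2}\triangle u,\triangle v)=(\varepsilon^{2}\nabla\psi,\nabla v)$, and then combine with the two mixed equations to see that $u$ satisfies (\ref{may1}). The only cosmetic difference is that you make the final appeal to uniqueness of the primal solution explicit, which the paper leaves implicit.
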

\begin{proof}
The necessity is trivial. If the solution of (\ref{discrete mixed
formulation P1}) is such that $u\in H^{2}(\Omega)\cap
H_{0}^{1}(\Omega)$, then we have from the second equation
\begin{equation*}
(-\triangle u,w)=(\psi,w),\ \ \ \forall w\in H_{0}^{1}(\Omega).
\end{equation*}
Notice that $H_{0}^{1}(\Omega)$ is dense in $L^{2}(\Omega)$. It follows that
\begin{equation*}
(-\triangle u,w)=(\psi,w),\ \ \ \forall w\in L^{2}(\Omega).
\end{equation*}

Integration by parts yields
\begin{equation*}
(-\triangle
u,\triangle\varphi)=(\psi,\triangle\varphi)=-(\nabla\psi,\nabla\varphi),\
\ \ \forall\varphi\in H^{2}(\Omega)\cap H_{0}^{1}(\Omega),
\end{equation*}
which implies
\begin{equation}\label{may3}
(\varepsilon^{2}\triangle
u,\triangle\varphi)=(\varepsilon^{2}\nabla\psi,\nabla\varphi),\ \ \
\forall\varphi\in H^{2}(\Omega)\cap H_{0}^{1}(\Omega).
\end{equation}
We obtain from (\ref{may3}) and the second equation of
(\ref{discrete mixed formulation P1}) that
\begin{equation*}
(\varepsilon^{2}\triangle u,\triangle\varphi)+(\nabla
u,\nabla\varphi)=(\varepsilon^{2}\nabla\psi,\nabla\varphi)+(\psi,\varphi),\
\ \ \forall\varphi\in H^{2}(\Omega)\cap H_{0}^{1}(\Omega).
\end{equation*}
In terms of (\ref{may1}), we proved that $u$ is the solution of
(\ref{may1}).
\end{proof}

\begin{theorem}
The solution $\tilde{u}$ of (\ref{may2}) and the $u$ determined by
(\ref{discrete mixed formulation P2}) are identical if and only if $u\in
H_{0}^{2}(\Omega)$.
\end{theorem}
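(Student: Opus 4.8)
The plan is to follow the proof of Theorem~\ref{mayy}, but to carry out the crucial integration by parts in the opposite direction, so that the boundary term is annihilated by the test function rather than by the multiplier $\psi$. The necessity is again immediate: if $u=\tilde u$, then $u=\tilde u\in H_0^2(\Omega)$, since $\tilde u$ is sought in $H_0^2(\Omega)$ in (\ref{may2}).

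For the sufficiency, assume $u\in H_0^2(\Omega)$. First I would recover the identity $\psi=-\triangle u$ in $L^2(\Omega)$. Restricting the second equation of (\ref{discrete mixed formulation P2}) to $v\in H_0^1(\Omega)\subset H^1(\Omega)$ and integrating by parts (the boundary term dropping because $v|_\Gamma=0$, while $\triangle u\in L^2(\Omega)$ is legitimate since $u\in H^2(\Omega)$) gives $(-\triangle u,v)=(\psi,v)$ for all $v\in H_0^1(\Omega)$; density of $H_0^1(\Omega)$ in $L^2(\Omega)$ then yields $\psi=-\triangle u$.

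Next, fix $v\in H_0^2(\Omega)$ and test the first equation of (\ref{discrete mixed formulation P2}) with $\varphi=v$, obtaining $\varepsilon^2(\nabla\psi,\nabla v)+(\psi,v)=(f,v)$. The heart of the argument is to rewrite $(\nabla\psi,\nabla v)$: since $\psi\in H^1(\Omega)$ and $v\in H_0^2(\Omega)$, Green's identity gives $(\nabla\psi,\nabla v)=-(\psi,\triangle v)+\int_\Gamma \psi\,\frac{\partial v}{\partial{\bf n}}\,ds$, and the boundary integral vanishes because $\frac{\partial v}{\partial{\bf n}}=0$ on $\Gamma$. Substituting $\psi=-\triangle u$ gives $(\nabla\psi,\nabla v)=(\triangle u,\triangle v)$. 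The lower-order term is matched directly by the second equation of (\ref{discrete mixed formulation P2}), which holds for $v\in H_0^2(\Omega)\subset H^1(\Omega)$, namely $(\psi,v)=(\nabla u,\nabla v)$. Combining these, $\varepsilon^2(\triangle u,\triangle v)+(\nabla u,\nabla v)=(f,v)$ for every $v\in H_0^2(\Omega)$, so $u$ solves (\ref{may2}); uniqueness of the solution of (\ref{may2}) then forces $u=\tilde u$.

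The step I expect to be the main obstacle is precisely the treatment of $\int_\Gamma \psi\,\frac{\partial v}{\partial{\bf n}}\,ds$. In Theorem~\ref{mayy} the analogous boundary contribution is discarded using $\psi\in H_0^1(\Omega)$, i.e. $\psi|_\Gamma=0$; here problem $P_2$ places no boundary condition on the multiplier, so $\psi$ need not vanish on $\Gamma$ and that route is unavailable. One must therefore orient the integration by parts so that it is the normal derivative $\frac{\partial v}{\partial{\bf n}}$ that appears on the boundary, and this vanishes exactly because the test functions of (\ref{may2}) lie in $H_0^2(\Omega)$ — this is where both homogeneous conditions in (\ref{Boundary condition2}) enter the argument.
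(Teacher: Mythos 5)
Your proof is correct and follows essentially the same route as the paper, which simply says to repeat the argument of Theorem 2.1: recover $\psi=-\triangle u$ from the second equation, integrate by parts to convert $(\nabla\psi,\nabla v)$ into $(\triangle u,\triangle v)$, and add the two equations. You have correctly identified and made explicit the one adaptation the paper leaves implicit, namely that for problem $P_2$ the boundary term in Green's identity is annihilated by $\frac{\partial v}{\partial{\bf n}}=0$ for test functions $v\in H_{0}^{2}(\Omega)$ rather than by a vanishing trace of $\psi$.
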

\begin{proof}
The necessity is trivial. From the second equation of (\ref{discrete
mixed formulation P2}), integration by parts, and variational
principle, we know that the Neumann boundary condition $\partial
u/\partial{\bf n}=0$ on $\Gamma$ is automatically satisfied.
Following the proof of Theorem \ref{mayy}, we know that if the
solution $u$ of (\ref{discrete mixed formulation P2}) is in
$H_{0}^{2}(\Omega)$, then $u$ is the solution of (\ref{may2}).
\end{proof}

Let $\mathcal{T}_{h}$ be a shape regular partition of $\Omega$ into
 triangles (tetrahedra for $d=3$) or parallelograms (parallelepiped
for $d=3$) satisfying the angle condition \cite{Ciarlet}, i.e.,
there exists a constant $C_{0}$ such that
\begin{equation}\label{PDE4}
C_{0}^{-1}h_{K}^{d}\leq|K|\leq C_{0}h_{K}^{d}\ \ \ \ \ \ \forall \
K\in\mathcal{T}_{h},
\end{equation}
where $h_{K} :={\rm diam}(K)$. Let $P_{k}(K)$ be the space of {
polynomials of total degree at most $k$ if $K$ is a simplex, or the
space of polynomials with degree at most $k$ for each variable if
$K$ is a parallelogram/parallelepiped}. Define the finite
element spaces $V_{h}$ and $V_{h}^{0}$ by
\begin{equation*}
V_{h} :=\{v_{h}\in C(\overline{\Omega}) :v_{h}|_{K}\in P_{k}(K), \
 \forall K\in\mathcal{T}_{h}\}
\end{equation*}
and
\begin{equation*}
V_{h}^{0} :=\{v_{h}\in V_{h} :v_{h}|_{\Gamma}=0\},
\end{equation*}
respectively.

We introduce the mixed finite element method for problem $P_{1}$:
find $(\psi_{h},u_{h})\in V_{h}^{0}\times V_{h}^{0}$ such that
\begin{equation}\label{mixed from P1}
\left \{\begin{array}{ll}
(\varepsilon^{2}\nabla\psi_{h},\nabla\varphi_{h})+(\psi_{h},\varphi_{h})=(f,\varphi_{h})\ \ \  & \forall\ \varphi_{h}\in V_{h}^{0},\\
(\nabla u_{h},\nabla v_{h})=(\psi_{h},v_{h})\ \ \ \ \ \ \ \ & \forall\ v_{h}\in V_{h}^{0}.
 \end{array}\right.
\end{equation}
For problem $P_{2}$, the mixed problem reads: find $(\psi_{h},u_{h})\in V_{h}\times
V_{h}^{0}$ such that
\begin{equation}\label{mixed form P2}
\left \{\begin{array}{ll}
(\varepsilon^{2}\nabla\psi_{h},\nabla\varphi_{h})+(\psi_{h},\varphi_{h})=(f,\varphi_{h})\ \ \  & \forall\ \varphi_{h}\in V_{h}^{0},\\
(\nabla u_{h},\nabla v_{h})=(\psi_{h},v_{h})\ \ \ \ \ \ \ \ &
\forall\ v_{h}\in V_{h}.
\end{array}\right.
\end{equation}

By standard arguments, problem (\ref{mixed from P1}) possesses a
unique solution provided there exist functions $\psi_{h}$ and
$u_{h}$ satisfying
\begin{equation}\label{mixed form P11}
\left \{\begin{array}{ll}
(\varepsilon^{2}\nabla\psi_{h},\nabla\varphi_{h})+(\psi_{h},\varphi_{h})=0\ \ \  & \forall\ \varphi_{h}\in V_{h}^{0},\\
(\nabla u_{h},\nabla v_{h})=(\psi_{h},v_{h})\ \ \ \ \ \ \ \ & \forall\ v_{h}\in V_{h}^{0},
 \end{array}\right.
\end{equation}
then $(\psi_{h},u_{h})$ is the trivial solution to the system. In fact, taking $\varphi_{h}=\psi_{h}$ in
the first equation of (\ref{mixed form P11}), one gets $\psi_{h}=0$.
Setting $v_{h}=u_{h}$ in the second equation of (\ref{mixed form
P11}), one obtains $u_{h}=0$. Similarly, it is verified that problem (\ref{mixed form P2}) has also a unique solution.

We define a measure of the error between the exact solution
$(\psi,u)$ and the numerical solution $(\psi_{h},u_{h})$ by
\begin{equation*}
\|(\psi-\psi_{h},u-u_{h})\|^{2}
:=\|\psi-\psi_{h}\|_{\mathcal{E}}^{2}+|u-u_{h}|_{1}^{2},
\end{equation*}
where
\begin{equation*}
\|\psi-\psi_{h}\|_{\mathcal{E}}=\big(\varepsilon^{2}|\psi-\psi_{h}|_{1}^{2}+\|\psi-\psi_{h}\|^{2}\big)^{1/2}
\end{equation*}
is the standard energy norm of the numerical error $\psi-\psi_{h}$.
In this paper, we aim at robust {\it a posterior} error estimators
for the numerical errors $\|\psi-\psi_{h}\|_{\mathcal{E}}$,
$|u-u_{h}|_{1}$, and $\|(\psi-\psi_{h},u-u_{h})\|$.

We next introduce some notations that will be used later. We
denote $\mathcal{E}_{h}^{0}$ the set of interior sides (if $d=2$) or faces (if $d=3$) in $\mathcal{T}_{h}$,
$\mathcal{E}_{T}$ the set of sides or faces of $T\in\mathcal{T}_{h}$, and
$\tilde{\omega}_{T}$ the union of all elements in
$\mathcal{T}_{h}$ sharing at least one point with $T$.
 For a side or face $E$ in $\mathcal{E}_{h}$, which is the set of element
 sides or faces in $\mathcal{T}_{h}$, let $h_{E}$ be the diameter of $E$,
 and $\omega_{E}$ be the union of all elements in $\mathcal{T}_{h}$ sharing $E$.
For a function $v$ in the ``broken Sobolev space" $H^{1}(\bigcup\mathcal{T}_{h})$, we define $[v]|_{E} :=(v|_{T_{+}})|_{E}-(v|_{T_{-}})|_{E}$ as
the jump of $v$ across an interior side or face $E$, where
$T_{+}$ and $T_{-}$ are the two neighboring elements such that $E =T_{+}\cap T_{-}$.

Throughout of this paper, we denote by $C_{Q}$ a constant
depending only on $Q$, and denote by $C_{i}\ (i=0, 1,
\cdot\cdot\cdot)$ constants depending on the mesh shape
regularity and $d$. In what follows we use the notation $A\lesssim
F$ to represent $A\leq CF$ with a generic constant $C>0$
independent of mesh size. In addition, $A\approx F$ abbreviates
$A\lesssim F\lesssim A$.

\section{Preliminary results}
For problem $P_{2}$, $\psi$ and $u$ are decoupled. However, $\psi$
does not obtain any information directly from boundary conditions.
It will be difficult to develop residual-based {\it a posteriori}
error estimates if the residual on the boundary is not clear. To
overcome this difficulty, we shall develop a novel analytical
technique (see Section 4), which is based on the following
approximation result.
\begin{lemma}\label{Julylemma1}
Let $v\in H^{1}(\Omega)/H_{0}^{1}(\Omega)$ which satisfies
$-\varepsilon^{2}\triangle v+v=0$ (operator to be understood in weak
sense). Then it holds
\begin{equation*}
\displaystyle\inf_{w\in
H_{0}^{1}(\Omega)}\frac{\varepsilon^{2}|v-w|_{1}^{2}+||v-w||^{2}}{\varepsilon^{2}|v|_{1}^{2}+||v||^{2}}=1.
\end{equation*}
\end{lemma}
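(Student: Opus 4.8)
The plan is to reinterpret the equation $-\varepsilon^{2}\triangle v+v=0$ variationally and exploit it as an orthogonality relation in the $\varepsilon$-weighted energy inner product. I would introduce the bilinear form $a(p,q):=\varepsilon^{2}(\nabla p,\nabla q)+(p,q)$ on $H^{1}(\Omega)$, so that the denominator is exactly $a(v,v)=\varepsilon^{2}|v|_{1}^{2}+\|v\|^{2}$ and the numerator is $a(v-w,v-w)=\varepsilon^{2}|v-w|_{1}^{2}+\|v-w\|^{2}$. Since $v$ satisfies the equation in the weak sense, testing against an arbitrary $\varphi\in H_{0}^{1}(\Omega)$ and integrating by parts (the boundary term vanishes because $\varphi$ has zero trace) gives $a(v,\varphi)=\varepsilon^{2}(\nabla v,\nabla\varphi)+(v,\varphi)=0$ for every $\varphi\in H_{0}^{1}(\Omega)$. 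In other words, $v$ is $a$-orthogonal to the whole subspace $H_{0}^{1}(\Omega)$.

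With this orthogonality, the infimum collapses by a Pythagorean expansion. For any $w\in H_{0}^{1}(\Omega)$ I would write $a(v-w,v-w)=a(v,v)-2a(v,w)+a(w,w)=a(v,v)+a(w,w)$, using $a(v,w)=0$. Because $a$ is positive definite, $a(w,w)\geq 0$, whence $a(v-w,v-w)\geq a(v,v)$, so the quotient is bounded below by $1$ for every admissible $w$. Taking $w=0\in H_{0}^{1}(\Omega)$ attains the value $1$, and therefore the infimum equals $1$.

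Two minor points deserve to be stated explicitly. First, the denominator is strictly positive: the notation $v\in H^{1}(\Omega)/H_{0}^{1}(\Omega)$ means $v\notin H_{0}^{1}(\Omega)$, so $v\neq 0$, hence $\|v\|>0$ and $a(v,v)>0$, and the quotient is well defined. Second, the weak sense of the equation is precisely the identity $a(v,\varphi)=0$ for all $\varphi\in H_{0}^{1}(\Omega)$, which is exactly what the argument consumes; no additional regularity of $v$ is required.

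There is no genuinely hard step here: once the hypothesis is read as energy orthogonality of $v$ to $H_{0}^{1}(\Omega)$, the conclusion is an immediate Pythagorean consequence, with the infimum realized at $w=0$. The only thing to be careful about is matching the weak formulation of $-\varepsilon^{2}\triangle v+v=0$ to the form $a(\cdot,\cdot)$, ensuring that the boundary contribution in the integration by parts truly disappears for test functions in $H_{0}^{1}(\Omega)$.
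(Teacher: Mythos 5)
Your proof is correct and rests on exactly the same fact the paper uses, namely that the weak form of $-\varepsilon^{2}\triangle v+v=0$ says $v$ is orthogonal to $H_{0}^{1}(\Omega)$ in the energy inner product $a(\cdot,\cdot)$. The paper reaches the conclusion by writing down the Euler--Lagrange equation for the minimizer $\tilde v$, observing that $\tilde v$ then solves the homogeneous Dirichlet problem $-\varepsilon^{2}\triangle\tilde v+\tilde v=0$, $\tilde v|_{\Gamma}=0$, and hence $\tilde v=0$; your Pythagorean expansion $a(v-w,v-w)=a(v,v)+a(w,w)\ge a(v,v)$ is a slightly more direct route to the same minimizer $w=0$, so the two arguments are essentially identical.
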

\begin{proof}
Consider the functional
\begin{equation*}
J(w)=\varepsilon^{2}|v-w|_{1}^{2}+||v-w||^{2},\ \ \forall\ w\in
H_{0}^{1}(\Omega).
\end{equation*}
Minimization of such functional in $H_{0}^{1}(\Omega)$ immediately
leads to the following variational problem: Find $\tilde{v}\in
H_{0}^{1}(\Omega)$ such that
\begin{equation}\label{July1}
\displaystyle\int_{\Omega}\big(\varepsilon^{2}\nabla(v-\tilde{v})\cdot\nabla\phi+(v-\tilde{v})\phi\big)d{\bf
x}=0,\ \ \forall\ \phi\in H_{0}^{1}(\Omega).
\end{equation}
Integrating by parts, we arrive at
\begin{equation*}
\displaystyle\int_{\Omega}\big(-\varepsilon^{2}\triangle(v-\tilde{v})+v-\tilde{v}\big)\phi
d{\bf x}=0,\ \ \forall\ \phi\in H_{0}^{1}(\Omega),
\end{equation*}
which implies
\begin{equation*}
-\varepsilon^{2}\triangle\tilde{v}+\tilde{v}=-\varepsilon^{2}\triangle
v+v=0.
\end{equation*}
So $\tilde{v}$ is the solution to the following problem:
\begin{equation}\label{July2}
    \left\{
    \begin{array}{ll}
        -\varepsilon^{2}\triangle\tilde{v}+\tilde{v}=0 & \mbox{in}~\Omega,\\
        \tilde{v}=0 & \mbox{on}~\Gamma.
     \end{array}
     \right.
\end{equation}

Since the problem (\ref{July2}) has only trivial solution, we have
\begin{align*}
    & \inf_{w\in H_{0}^{1}(\Omega)}\frac{\varepsilon^{2}|v-w|_{1}^{2}+||v-w||^{2}}{\varepsilon^{2}|v|_{1}^{2}+||v||^{2}} = \frac{\inf_{w\in H_{0}^{1}(\omega)}J(w)}{\varepsilon^{2}|v|_{1}^{2}+||v||^{2}} \\
    = & \frac{\varepsilon^{2}|v-\tilde{v}|_{1}^{2} + ||v-\tilde{v}||^{2}}{\varepsilon^{2}|v|_{1}^{2}+||v||^{2}} = \frac{\varepsilon^{2}|v|_{1}^{2}+||v||^{2}}{\varepsilon^{2}|v|_{1}^{2}+||v||^{2}} = 1,
\end{align*}
which completes the proof.
\end{proof}

\begin{lemma}\label{Julylemma2}
If there holds the following relation
\begin{equation*}
\displaystyle\inf_{w\in
H_{0}^{1}(\Omega)}\frac{\varepsilon^{2}|v-w|_{1}^{2}+||v-w||^{2}}{\varepsilon^{2}|v|_{1}^{2}+||v||^{2}}=1,
\end{equation*}
then $v$ satisfies $-\varepsilon^{2}\triangle v+v=0$.
\end{lemma}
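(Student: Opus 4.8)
The plan is to recognize that the infimum defining the quotient is a best-approximation problem in the energy inner product, whose minimizer is exactly the function $\tilde{v}$ produced in the proof of Lemma \ref{Julylemma1}. To make this precise I would introduce the symmetric bilinear form $a(v,w):=\varepsilon^{2}(\nabla v,\nabla w)+(v,w)$ on $H^{1}(\Omega)$, which is an inner product inducing the norm $\|v\|_{a}^{2}:=\varepsilon^{2}|v|_{1}^{2}+\|v\|^{2}$. The denominator of the quotient is then $\|v\|_{a}^{2}$ and the numerator is $\|v-w\|_{a}^{2}$; note that $\|v\|_{a}>0$ must be assumed for the quotient to be defined, so in particular $v\neq 0$.

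First I would recall that minimizing $J(w)=\|v-w\|_{a}^{2}$ over the closed subspace $H_{0}^{1}(\Omega)$ has a unique solution $\tilde{v}\in H_{0}^{1}(\Omega)$, characterized by the orthogonality relation (\ref{July1}), that is $a(v-\tilde{v},\phi)=0$ for all $\phi\in H_{0}^{1}(\Omega)$. Since $\tilde{v}\in H_{0}^{1}(\Omega)$ is itself an admissible test function, this orthogonality yields the Pythagorean identity
\begin{equation*}
\|v\|_{a}^{2}=\|v-\tilde{v}\|_{a}^{2}+\|\tilde{v}\|_{a}^{2},
\end{equation*}
which is the structural core of the whole argument.

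Next I would feed in the hypothesis. Because $\tilde{v}$ attains the infimum, the assumption $\inf_{w}\|v-w\|_{a}^{2}/\|v\|_{a}^{2}=1$ is equivalent to $\|v-\tilde{v}\|_{a}^{2}=\|v\|_{a}^{2}$. Substituting this into the Pythagorean identity forces $\|\tilde{v}\|_{a}^{2}=0$, and since $\|\cdot\|_{a}$ is a genuine norm this gives $\tilde{v}=0$. Inserting $\tilde{v}=0$ back into the orthogonality relation (\ref{July1}) produces $a(v,\phi)=\varepsilon^{2}(\nabla v,\nabla\phi)+(v,\phi)=0$ for every $\phi\in H_{0}^{1}(\Omega)$, which is exactly the weak formulation of $-\varepsilon^{2}\triangle v+v=0$, completing the proof.

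I expect the only delicate point to be the clean passage from the scalar equality of the quotient to the vanishing of $\tilde{v}$: one must be certain that the infimum is genuinely attained (so the hypothesis can be read as an equality at $w=\tilde{v}$ rather than merely a limit) and that the denominator is nonzero. Both facts are guaranteed once $v\neq 0$ and the projection argument of Lemma \ref{Julylemma1} is invoked; after that the Pythagorean identity does all the work, and no further estimates are required.
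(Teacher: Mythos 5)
Your proof is correct and follows essentially the same route as the paper: both identify the minimizer $\tilde{v}$ as the projection characterized by (\ref{July1}) and then conclude $\tilde{v}=0$, after which the orthogonality relation is exactly the weak form of $-\varepsilon^{2}\triangle v+v=0$. Your use of the Pythagorean identity to force $\|\tilde{v}\|_{a}=0$ is a cleaner justification of the step the paper only asserts (that the infimum being $1$, already attained at $w=0$, forces $\tilde{v}=0$ by uniqueness of the minimizer).
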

\begin{proof}
Since $\tilde{v}=0\in H_{0}^{1}(\Omega)$, the condition
\begin{equation*}
1=\displaystyle\inf_{w\in
H_{0}^{1}(\Omega)}\frac{\varepsilon^{2}|v-w|_{1}^{2}+||v-w||^{2}}{\varepsilon^{2}|v|_{1}^{2}+||v||^{2}}
\end{equation*}
can be satisfied when $w=\tilde{v}$. On the other hand, $\tilde{v}$
is the solution to the variational problem: Find $\tilde{v}\in
H_{0}^{1}(\Omega)$ such that
\begin{equation*}
\displaystyle\int_{\Omega}\big(\varepsilon^{2}\nabla(v-\tilde{v})\cdot\nabla\phi+(v-\tilde{v})\phi\big)d{\bf
x}=0,\ \ \forall\ \phi\in H_{0}^{1}(\Omega).
\end{equation*}
By integrating by parts, we obtain
\begin{equation*}
\displaystyle\int_{\Omega}\big(-\varepsilon^{2}\triangle(v-\tilde{v})+(v-\tilde{v})\big)\phi
d{\bf x}=0,\ \ \forall\ \phi\in H_{0}^{1}(\Omega),
\end{equation*}
which leads to
$-\varepsilon^{2}\triangle(v-\tilde{v})+(v-\tilde{v})=0$, this means
\begin{equation*}
-\varepsilon^{2}\triangle
v+v=-\varepsilon^{2}\triangle\tilde{v}+\tilde{v}=0.
\end{equation*}
\end{proof}

\begin{lemma}\label{Julylemma3}
Let $v\in H^{1}(\Omega)$ such that $-\varepsilon^{2}\triangle
v+v\neq0$. Then there exists $\gamma\in(0,1)$ such that
\begin{equation*}
\displaystyle\inf_{w\in
H_{0}^{1}(\Omega)}\frac{\varepsilon^{2}|v-w|_{1}^{2}+||v-w||^{2}}{\varepsilon^{2}|v|_{1}^{2}+||v||^{2}}\leq\gamma.
\end{equation*}
\end{lemma}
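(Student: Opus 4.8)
The plan is to show that the quotient in the statement, which I denote
\[
I(v):=\inf_{w\in H_{0}^{1}(\Omega)}\frac{\varepsilon^{2}|v-w|_{1}^{2}+\|v-w\|^{2}}{\varepsilon^{2}|v|_{1}^{2}+\|v\|^{2}},
\]
is \emph{strictly} less than $1$, and then to take any $\gamma$ strictly between $I(v)$ and $1$. First I would record the trivial upper bound: since $v\neq 0$ (otherwise $-\varepsilon^{2}\triangle v+v=0$, contrary to hypothesis), the denominator is positive, and since $0\in H_{0}^{1}(\Omega)$ is admissible, the choice $w=0$ makes the numerator equal to the denominator, so $I(v)\le 1$. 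Thus the entire content of the lemma is the strict inequality $I(v)<1$, and it must be powered by the nondegeneracy assumption $-\varepsilon^{2}\triangle v+v\neq 0$.

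To obtain strictness I would argue by contradiction using Lemma \ref{Julylemma2}. Suppose $I(v)=1$; then the hypothesis of Lemma \ref{Julylemma2} is exactly met, and its conclusion forces $-\varepsilon^{2}\triangle v+v=0$, contradicting the standing assumption. Hence $I(v)\neq 1$, which combined with $I(v)\le 1$ gives $I(v)<1$.

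For a self-contained and quantitative alternative I would instead exhibit the minimizer explicitly, in the spirit of Lemma \ref{Julylemma1}. Let $\tilde v\in H_{0}^{1}(\Omega)$ be the (unique, coercive) minimizer of $J(w)=\varepsilon^{2}|v-w|_{1}^{2}+\|v-w\|^{2}$, characterized by $\varepsilon^{2}(\nabla(v-\tilde v),\nabla\phi)+(v-\tilde v,\phi)=0$ for all $\phi\in H_{0}^{1}(\Omega)$, so that $I(v)=J(\tilde v)/(\varepsilon^{2}|v|_{1}^{2}+\|v\|^{2})$. Testing this identity with $\phi=\tilde v$ yields the orthogonality relation $\varepsilon^{2}(\nabla v,\nabla\tilde v)+(v,\tilde v)=\varepsilon^{2}|\tilde v|_{1}^{2}+\|\tilde v\|^{2}$, and expanding $J(\tilde v)$ collapses the cross term to give
\[
I(v)=1-\frac{\varepsilon^{2}|\tilde v|_{1}^{2}+\|\tilde v\|^{2}}{\varepsilon^{2}|v|_{1}^{2}+\|v\|^{2}}.
\]
Here $\tilde v=0$ would force $\varepsilon^{2}(\nabla v,\nabla\phi)+(v,\phi)=0$ for all $\phi\in H_{0}^{1}(\Omega)$, i.e.\ $-\varepsilon^{2}\triangle v+v=0$; since this is excluded, $\tilde v\neq 0$, the subtracted fraction is strictly positive, and $I(v)<1$ with an explicit value.

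Finally, existence of $\gamma\in(0,1)$ follows by setting, say, $\gamma:=\tfrac12\bigl(1+I(v)\bigr)$, which satisfies $I(v)\le\gamma<1$ and $\gamma>0$. The one delicate point I anticipate is the demand $\gamma>0$ rather than $\gamma\ge 0$: when $v\in H_{0}^{1}(\Omega)$ the infimum is in fact $I(v)=0$ (attained at $w=v$), so one cannot simply take $\gamma=I(v)$, and the midpoint choice is what repairs this. Everything else is routine; the essential step — excluding $I(v)=1$ — rests entirely on the nondegeneracy hypothesis, either through Lemma \ref{Julylemma2} or, equivalently, through the nonvanishing of the energy projection $\tilde v$.
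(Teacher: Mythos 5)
Your proposal is correct. Writing $I(v)$ for the infimum quotient in the statement, your primary argument is essentially the paper's own: both reduce the lemma to excluding the case $I(v)=1$ and then invoke Lemma \ref{Julylemma2} to contradict $-\varepsilon^{2}\triangle v+v\neq0$. Your version is tidier, though --- the paper reaches $I(v)=1$ under the contradiction hypothesis by a roundabout route (taking $\gamma=\frac{n-1}{n}$ and passing to the supremum over $n$), whereas you get it immediately from the trivial bound $I(v)\le 1$ furnished by $w=0$. Your second, self-contained argument is a genuine alternative that the paper does not give: the orthogonality relation for the energy projection $\tilde v$ yields the exact identity
\[
I(v)=1-\frac{\varepsilon^{2}|\tilde v|_{1}^{2}+\|\tilde v\|^{2}}{\varepsilon^{2}|v|_{1}^{2}+\|v\|^{2}},
\]
so strictness of $I(v)<1$ becomes equivalent to $\tilde v\neq0$, which is precisely the nondegeneracy hypothesis. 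This buys an explicit admissible $\gamma$ and makes transparent that $\gamma$ depends on $v$ --- a point worth keeping in mind, since the lemma is later applied inside a supremum over $v$ in the proof of the reliability bound. Your observation that the midpoint choice is needed to secure $\gamma>0$ when $v\in H_{0}^{1}(\Omega)$ (where $I(v)=0$) correctly handles the case the paper dismisses as ``obvious.''
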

\begin{proof}
We only prove the case $v\in H^{1}(\Omega)/H_{0}^{1}(\Omega)$ using
proof by contradiction, since $v\in H_{0}^{1}(\Omega)$ is obvious.
Assume that there does not exist $\gamma\in(0,1)$ such that
\begin{equation*}
\displaystyle\inf_{w\in
H_{0}^{1}(\omega)}(\varepsilon^{2}|v-w|_{1}^{2}+||v-w||^{2})\leq\gamma(\varepsilon^{2}|v|_{1}^{2}+||v||^{2}),
\end{equation*}
which means
\begin{equation*}
\displaystyle\inf_{w\in
H_{0}^{1}(\omega)}(\varepsilon^{2}|v-w|_{1}^{2}+||v-w||^{2})>\gamma(\varepsilon^{2}|v|_{1}^{2}+||v||^{2})
\end{equation*}
for all $\gamma\in(0,1)$. From the proof of Lemma \ref{Julylemma1}, there exists $\tilde{v}\in H_{0}^{1}(\Omega)$ such that
\begin{equation*}
\begin{array}{lll}
\varepsilon^{2}|v|_{1}^{2}+||v||^{2}&\geq&\displaystyle\inf_{w\in
H_{0}^{1}(\Omega)}(\varepsilon^{2}|v-w|_{1}^{2}+||v-w||^{2})\vspace{2mm}\\
&=&\varepsilon^{2}|v-\tilde{v}|_{1}^{2}+||v-\tilde{v}||^{2}\vspace{2mm}\\
&>&\gamma(\varepsilon^{2}|v|_{1}^{2}+||v||^{2}).
\end{array}
\end{equation*}
In particular, for any $n\in \mathbb{N}$, let $\gamma=\frac{n-1}{n}$, it holds that
\begin{equation*}
\varepsilon^{2}|v|_{1}^{2}+||v||^{2}\geq\varepsilon^{2}|v-\tilde{v}|_{1}^{2}+||v-\tilde{v}||^{2}>
\frac{n-1}{n}(\varepsilon^{2}|v|_{1}^{2}+||v||^{2}).
\end{equation*}
Since $\varepsilon^{2}|v-\tilde{v}|_{1}^{2}+||v-\tilde{v}||^{2}$ is
the upper bound for
$\frac{n-1}{n}(\varepsilon^{2}|v|_{1}^{2}+||v||^{2})$ with respect
to the positive integer number $n$, and
$\varepsilon^{2}|v|_{1}^{2}+||v||^{2}$ is its supremum. Therefore,
\begin{equation*}
\varepsilon^{2}|v|_{1}^{2}+||v||^{2}\leq\varepsilon^{2}|v-\tilde{v}|_{1}^{2}+||v-\tilde{v}||^{2}\leq
\varepsilon^{2}|v|_{1}^{2}+||v||^{2},
\end{equation*}
which means
\begin{equation*}
\varepsilon^{2}|v-\tilde{v}|_{1}^{2}+||v-\tilde{v}||^{2}=
\varepsilon^{2}|v|_{1}^{2}+||v||^{2}.
\end{equation*}
This leads to
\begin{equation*}
\displaystyle\inf_{w\in
H_{0}^{1}(\Omega)}\frac{\varepsilon^{2}|v-w|_{1}^{2}+||v-w||^{2}}{\varepsilon^{2}|v|_{1}^{2}+||v||^{2}}
=\displaystyle\frac{\varepsilon^{2}|v-\tilde{v}|_{1}^{2}+
||v-\tilde{v}||^{2}}{\varepsilon^{2}|v|_{1}^{2}+||v||^{2}}=1.
\end{equation*}
From Lemma \ref{Julylemma2}, we have $-\varepsilon^{2}\triangle
v+v=0$, this leads to a contradiction. We complete the proof.
\end{proof}

\begin{remark}
Note that we can not prove that $\big\{v\in H^{1}(\Omega):
-\triangle v+v\neq0\big\}$ is dense in $H_{0}^{1}(\Omega)$ by
recursion by using Lemma \ref{Julylemma3}, because of
$-\triangle(v-\tilde{v})+(v-\tilde{v})=0$.
\end{remark}

Denote by $I_{h} :L^{2}(\Omega)\rightarrow V_{h}^{0}$ the
quasi-interpolation operator of Cl\'{e}ment (cf.
\cite{Ciarlet,Clement,Verfurth2}).
\begin{lemma}\label{year0}
For all $T\in\mathcal{T}_{h},E\subset\partial T$, define
$\alpha_{T}$ and $\alpha_{E}$ the weighted factors by
\begin{equation*}
\alpha_{T} :=\min\{h_{T}\varepsilon^{-1},1\}\ \ {\rm and}\ \
\alpha_{E} :=\varepsilon^{-1/2}\min\{h_{T}\varepsilon^{-1},1\},
\end{equation*}
respectively. Then the following local error estimates hold for
$v\in H^{1}(\tilde{\omega}_{T})$:
\begin{equation}\label{year1}
\|v-I_{h}v\|_{T}\lesssim\alpha_{T}\|v\|_{\mathcal{E},\tilde{\omega}_{T}}
\end{equation}
and
\begin{equation}\label{year2}
\|v-I_{h}v\|_{E}\lesssim\alpha_{E}\|v\|_{\mathcal{E},\tilde{\omega}_{T}}.
\end{equation}
\end{lemma}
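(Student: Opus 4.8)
The plan is to derive both estimates from $\varepsilon$-independent local properties of the Cl\'ement operator $I_h$ and to let all dependence on $\varepsilon$ enter only through a case distinction between the diffusion-dominated regime $h_T\le\varepsilon$ and the reaction-dominated regime $h_T>\varepsilon$. Writing $w:=v-I_hv$, I would use three standard local bounds on the patch $\tilde{\omega}_T$ (cf. \cite{Clement,Verfurth2}): the approximation estimate $\|w\|_T\lesssim h_T|v|_{1,\tilde{\omega}_T}$, the $L^2$-stability $\|w\|_T\lesssim\|v\|_{\tilde{\omega}_T}$, and the $H^1$-stability $|w|_{1,T}\lesssim|v|_{1,\tilde{\omega}_T}$. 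These are coupled to the energy norm only through the two trivial inequalities $\varepsilon|v|_{1,\tilde{\omega}_T}\le\|v\|_{\mathcal{E},\tilde{\omega}_T}$ and $\|v\|_{\tilde{\omega}_T}\le\|v\|_{\mathcal{E},\tilde{\omega}_T}$, so the weights $\alpha_T$ and $\alpha_E$ will be produced purely by the case analysis.

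For the volume estimate (\ref{year1}) I would combine the first two bounds into $\|w\|_T\lesssim\min\{h_T|v|_{1,\tilde{\omega}_T},\,\|v\|_{\tilde{\omega}_T}\}$ and split according to the regime. If $h_T\le\varepsilon$, so that $\alpha_T=h_T\varepsilon^{-1}$, the approximation estimate gives $\|w\|_T\lesssim h_T|v|_{1,\tilde{\omega}_T}=(h_T\varepsilon^{-1})\,\varepsilon|v|_{1,\tilde{\omega}_T}\le\alpha_T\|v\|_{\mathcal{E},\tilde{\omega}_T}$. If $h_T>\varepsilon$, so that $\alpha_T=1$, the $L^2$-stability gives $\|w\|_T\lesssim\|v\|_{\tilde{\omega}_T}\le\|v\|_{\mathcal{E},\tilde{\omega}_T}=\alpha_T\|v\|_{\mathcal{E},\tilde{\omega}_T}$. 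In either case (\ref{year1}) follows, and this part is routine.

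For the edge estimate (\ref{year2}) the decisive choice is to use the multiplicative trace inequality $\|w\|_E^2\lesssim h_E^{-1}\|w\|_{\omega_E}^2+\|w\|_{\omega_E}\,|w|_{1,\omega_E}$ rather than the additive form $\|w\|_E^2\lesssim h_E^{-1}\|w\|_T^2+h_T|w|_{1,T}^2$. Inserting $\|w\|_{\omega_E}\lesssim\alpha_T\|v\|_{\mathcal{E},\tilde{\omega}_T}$ from (\ref{year1}) and $|w|_{1,\omega_E}\lesssim|v|_{1,\tilde{\omega}_T}\le\varepsilon^{-1}\|v\|_{\mathcal{E},\tilde{\omega}_T}$, and using $h_E\approx h_T$, reduces the right-hand side to $(h_T^{-1}\alpha_T^2+\varepsilon^{-1}\alpha_T)\|v\|_{\mathcal{E},\tilde{\omega}_T}^2$. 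A final distinction between $h_T\le\varepsilon$ and $h_T>\varepsilon$ then collapses this bracket to the single weighted factor $\alpha_E$, which completes (\ref{year2}).

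The step I expect to be the main obstacle is precisely this last balancing in the reaction-dominated regime $h_T>\varepsilon$. Had one used the additive trace inequality, the gradient contribution $h_T|w|_{1,T}^2\lesssim h_T\varepsilon^{-2}\|v\|_{\mathcal{E},\tilde{\omega}_T}^2$ would overshoot the target by the factor $h_T\varepsilon^{-1}>1$ and the bound would fail to be robust exactly where the layer of $\psi$ is located. The multiplicative inequality cures this because it pairs the gradient with the genuinely small $L^2$ interpolation error $\|w\|_{\omega_E}$ (already controlled by $\alpha_T$) instead of with the fixed mesh weight $h_T^{1/2}$. Verifying that this pairing reproduces the declared weight $\alpha_E$ uniformly in $\varepsilon$, in both regimes and with a constant depending only on the shape regularity and $d$, is where the argument must be made carefully.
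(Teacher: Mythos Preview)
Your argument is correct and is precisely the approach of Lemma~3.2 in \cite{Verfurth2}, which is all the paper invokes for its proof; there is nothing to add on the method side.

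One point worth flagging: the bracket $h_T^{-1}\alpha_T^2+\varepsilon^{-1}\alpha_T$ that your trace argument produces collapses, in both regimes, to $\varepsilon^{-1}\alpha_T$, i.e.\ to $\alpha_E^2$ with $\alpha_E=\varepsilon^{-1/2}\min\{h_T\varepsilon^{-1},1\}^{1/2}$. This is the weight in Verf\"urth's paper and the one actually used later in the efficiency proof (see the multiplication by $\alpha_E$ in the proof of Lemma~\ref{year32}). The statement as printed drops the exponent $1/2$ on the minimum, which appears to be a typo; with that literal definition your case $h_T\le\varepsilon$ would give $h_T\varepsilon^{-2}$ against a target $h_T^2\varepsilon^{-3}$, and the collapse would fail by the factor $\varepsilon/h_T$. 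So your proof establishes the intended estimate, not the one literally displayed.
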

\begin{proof}
Following the line of the proof of Lemma 3.2 in \cite{Verfurth2}, we
obtain the desired estimates (\ref{year1}) and (\ref{year2}).
\end{proof}

For $\theta\in(0,1], T\in\mathcal{T}_{h},E\in\mathcal{E}_{h}$,
denote $\psi_{T}$ and $\psi_{E,\theta}$ the two bubble functions
defined in \cite{Verfurth2}, and $P_{E}$ a continuation operator
introduced in \cite{Verfurth2} by
\begin{equation*}
P_{E} :L^{\infty}(E)\rightarrow L^{\infty}(\omega_{E}),
\end{equation*}
which maps polynomials onto piecewise polynomials of the same
degree.
\begin{lemma}\label{year3}
The following estimates hold for all $v\in\mathcal{P}_{k}$ (the set
of polynomials of degree at most $k$) and $T\in\mathcal{T}_{h}$
\begin{equation}\label{year4}
\|v\|_{T}^{2}\lesssim (v,\psi_{T}v)_{T},
\end{equation}
\begin{equation}\label{year5}
\|v\psi_{T}\|_{T}\leq\|v\|_{T},
\end{equation}
\begin{equation}\label{year6}
\|v\psi_{T}\|_{\mathcal{E},T}\lesssim\alpha_{T}^{-1}\|v\|_{T}.
\end{equation}
Furthermore, for $E\in\mathcal{E}_{h}$, set $\theta_{E}
:=\min\{\varepsilon h_{E}^{-1},1\}$. Then there hold the following
estimates for all $E\in\mathcal{E}_{h}$ and
$\sigma\in\mathcal{P}_{k}|_{E}$.
\begin{equation}\label{year7}
\|\sigma\|_{E}^{2}\lesssim(\sigma,\psi_{E,\theta_{E}}P_{E}\sigma)_{E},
\end{equation}
\begin{equation}\label{year8}
\|\psi_{E,\theta_{E}}P_{E}\sigma\|_{\omega_{E}}\lesssim\varepsilon^{1/2}\min\{h_{E}\varepsilon^{-1},1\}^{1/2}\|\sigma\|_{E},
\end{equation}
\begin{equation}\label{year9}
\|\psi_{E,\theta_{E}}P_{E}\sigma\|_{\mathcal{E},\omega_{E}}\lesssim\varepsilon^{1/2}\min\{h_{E}\varepsilon^{-1},1\}^{-1/2}\|\sigma\|_{E}.
\end{equation}
\end{lemma}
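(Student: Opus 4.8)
The plan is to establish the six estimates by the standard bubble-function machinery of Verf\"urth, organized into the element-bubble estimates \eqref{year4}--\eqref{year6} and the face/edge-bubble estimates \eqref{year7}--\eqref{year9}, tracking in each case how the perturbation parameter enters through the weights $\alpha_T$ and $\theta_E$. Throughout I would reduce every inequality to a reference element $\hat{K}$ (or reference face $\hat{E}$), where all quantities live in a fixed finite-dimensional polynomial space, invoke equivalence of norms there, and scale back, so that all hidden constants depend only on the shape regularity \eqref{PDE4} and on $d$, never on $h_T$, $h_E$, or $\varepsilon$.

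For the element-bubble estimates I would argue as follows. Estimate \eqref{year4} is the usual norm equivalence: since $\psi_T\ge0$ and is strictly positive in the interior of $T$, the map $v\mapsto(v,\psi_T v)_T^{1/2}$ is a norm on $\mathcal{P}_k|_T$ equivalent to $\|\cdot\|_T$ by a scaling argument. Estimate \eqref{year5} is immediate from $0\le\psi_T\le1$, since $\|v\psi_T\|_T^2=\int_T v^2\psi_T^2\le\int_T v^2=\|v\|_T^2$. For \eqref{year6} I would combine \eqref{year5} with the inverse inequality $|v\psi_T|_{1,T}\lesssim h_T^{-1}\|v\psi_T\|_T\le h_T^{-1}\|v\|_T$ to obtain
\[
\|v\psi_T\|_{\mathcal{E},T}^2=\varepsilon^2|v\psi_T|_{1,T}^2+\|v\psi_T\|_T^2\lesssim(\varepsilon^2 h_T^{-2}+1)\|v\|_T^2,
\]
and then use $\varepsilon^2 h_T^{-2}+1\approx\max\{\varepsilon^2 h_T^{-2},1\}=\alpha_T^{-2}$ to conclude.

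For the face/edge-bubble estimates the essential point is the dependence on the squeezing parameter $\theta_E$. Because $\psi_{E,\theta}$ restricted to $E$ is a fixed face bubble independent of $\theta$ and $P_E\sigma|_E=\sigma$, estimate \eqref{year7} is again a norm equivalence on $\mathcal{P}_k|_E$ and is insensitive to $\theta_E$. For \eqref{year8} and \eqref{year9} I would use that, by the construction in \cite{Verfurth2}, the support of $\psi_{E,\theta_E}P_E\sigma$ is a strip about $E$ of transverse width $\sim\theta_E h_E$; scaling this strip to a reference configuration yields the two companion bounds
\[
\|\psi_{E,\theta_E}P_E\sigma\|_{\omega_E}\lesssim(\theta_E h_E)^{1/2}\|\sigma\|_E,\qquad |\psi_{E,\theta_E}P_E\sigma|_{1,\omega_E}\lesssim(\theta_E h_E)^{-1/2}\|\sigma\|_E.
\]
The whole $\varepsilon$-robustness then rests on the algebraic identity $\theta_E h_E=\min\{\varepsilon,h_E\}=\varepsilon\min\{h_E\varepsilon^{-1},1\}$. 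Substituting it into the first bound gives \eqref{year8} directly. For \eqref{year9} I would multiply the gradient bound by $\varepsilon$ to get $\varepsilon|\psi_{E,\theta_E}P_E\sigma|_{1,\omega_E}\lesssim\varepsilon(\theta_E h_E)^{-1/2}\|\sigma\|_E=\varepsilon^{1/2}\min\{h_E\varepsilon^{-1},1\}^{-1/2}\|\sigma\|_E$, and observe that the $L^2$ contribution from \eqref{year8} is dominated by this because $\min\{h_E\varepsilon^{-1},1\}\le1$ forces $\min\{h_E\varepsilon^{-1},1\}^{1/2}\le\min\{h_E\varepsilon^{-1},1\}^{-1/2}$; adding the two pieces in the definition of $\|\cdot\|_{\mathcal{E},\omega_E}$ yields \eqref{year9}.

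The scalings and inverse inequalities are classical and routine. I expect the only delicate point to be the sharp tracking of the $\theta_E$-dependence in \eqref{year8}--\eqref{year9}: one must verify that the width of the squeezed bubble enters exactly as $(\theta_E h_E)^{\pm1/2}$ and that the balancing choice $\theta_E=\min\{\varepsilon h_E^{-1},1\}$ converts these powers into the stated $\varepsilon$-weights, all while keeping the constants independent of $\varepsilon$. This is precisely the mechanism by which the support width of the face bubble is tuned to the boundary-layer scale.
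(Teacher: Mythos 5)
Your proposal is correct and follows exactly the route the paper takes: the paper's proof of this lemma is a one-line citation of Lemma 3.3 in Verf\"urth's 1998 paper, and your argument reconstructs precisely that standard bubble-function machinery (norm equivalence on reference configurations, inverse inequalities, and the squeezed-bubble scaling $(\theta_E h_E)^{\pm 1/2}$ combined with the identity $\theta_E h_E=\varepsilon\min\{h_E\varepsilon^{-1},1\}$). The details you supply, including the dominance of the gradient term in \eqref{year9}, are consistent with the cited source.
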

\begin{proof}
Following the line of the proof of Lemma 3.3 in \cite{Verfurth2}, we
attain (\ref{year4})-(\ref{year9}).
\end{proof}

\section{A reliable upper bound}
For all $T\in\mathcal{T}_{h}$, define $\eta_{\psi,T}$ and
$\eta_{u,T}$ the elementwise indicators of $\psi$ and $u$, respectively,
by
\begin{equation*}
    \eta_{\psi,T}
    := \Big\{ \alpha_{T}^{2} \|f+\varepsilon^{2}\triangle\psi_{h}-\psi_{h}\|_{T}^{2} + \frac{1}{2} \sum_{E\in\mathcal{E}_{T}\cap\mathcal{E}_{h}^{0}} \alpha_{E}^{2} \Big\| [\varepsilon^{2}\frac{\partial\psi_{h}}{\partial{\bf n}}] \Big\|_{E}^{2} \Big\}^{1/2}
\end{equation*}
and
\begin{equation*}
    \eta_{u,T} := \Big\{h_{T}^{2}\|\triangle u_{h}+\psi_{h}\|_{T}^{2} + \frac{1}{2} \sum_{E\in\mathcal{E}_{T}\cap\mathcal{E}_{h}^{0}}h_{E} \Big\|[\frac{\partial u_{h}}{\partial {\bf n}}] \Big\|_{E}^{2} \Big\}^{1/2}.
\end{equation*}

\begin{theorem}
Let $(\psi,u)\in H_{0}^{1}(\Omega)\times H_{0}^{1}(\Omega)$ and
$(\psi_{h},u_{h})\in V_{h}^{0}\times V_{h}^{0}$ be the solutions to
(\ref{discrete mixed formulation P1}) and (\ref{mixed from P1}),
respectively. Then there exist positive constants $C_{1},C_{2}$, and
$C_{3}$, independent of the mesh-size function $h$ and
$\varepsilon$, such that
\begin{equation}\label{year10}
    \|\psi-\psi_{h}\|_{\mathcal{E}}\leq C_{1} \Big\{\sum\limits_{T\in\mathcal{T}_{h}}\eta_{\psi,T}^{2}\Big\}^{1/2},
\end{equation}
\begin{equation}\label{year11}
    |u-u_{h}|_{1}\leq C_{2} \Big\{\sum\limits_{T\in\mathcal{T}_{h}}\eta_{\psi,T}^{2}+\eta_{u,T}^{2}\Big\}^{1/2},
\end{equation}
\begin{equation}\label{year12}
    \|(\psi-\psi_{h},u-u_{h})\|\leq C_{3} \Big\{\sum\limits_{T\in\mathcal{T}_{h}}\eta_{\psi,T}^{2}+\eta_{u,T}^{2}\Big\}^{1/2}.
\end{equation}
\end{theorem}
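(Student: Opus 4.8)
The plan is to treat the three bounds in sequence: derive the energy bound (\ref{year10}) for $\psi$ first by a residual argument adapted to the $\varepsilon$-weighted setting, then bootstrap to the $H^1$ bound (\ref{year11}) for $u$, and finally assemble (\ref{year12}) by adding squares.

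\emph{Step 1: the energy bound for $\psi$.} Write $e_\psi = \psi-\psi_h$. Since both $\psi$ and $\psi_h$ lie in $H_0^1(\Omega)$, so does $e_\psi$, and I would start from $\|e_\psi\|_{\mathcal{E}}^2 = \varepsilon^2(\nabla\psi,\nabla e_\psi)+(\psi,e_\psi)-\varepsilon^2(\nabla\psi_h,\nabla e_\psi)-(\psi_h,e_\psi)$. The first equation of (\ref{discrete mixed formulation P1}) with $\varphi=e_\psi$ replaces the first two terms by $(f,e_\psi)$, while subtracting the first equation of (\ref{mixed from P1}) (Galerkin orthogonality against $\varphi_h=I_h e_\psi\in V_h^0$) lets me substitute $w:=e_\psi-I_h e_\psi$ for $e_\psi$. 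An elementwise integration by parts then gives
\begin{equation*}
\|e_\psi\|_{\mathcal{E}}^2 = \sum_{T}(f+\varepsilon^2\triangle\psi_h-\psi_h,\,w)_T - \sum_{E\in\mathcal{E}_h^0}([\varepsilon^2\,\partial\psi_h/\partial{\bf n}],\,w)_E,
\end{equation*}
the boundary faces dropping out because $w\in H_0^1(\Omega)$.

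\emph{Step 2: applying the weighted Cl\'ement estimates.} The $\varepsilon$-robustness comes entirely from bounding $w$ by the weighted local estimates (\ref{year1}) and (\ref{year2}) of Lemma \ref{year0}, namely $\|w\|_T\lesssim\alpha_T\|e_\psi\|_{\mathcal{E},\tilde\omega_T}$ and $\|w\|_E\lesssim\alpha_E\|e_\psi\|_{\mathcal{E},\tilde\omega_T}$, whose weights $\alpha_T,\alpha_E$ are precisely those appearing in $\eta_{\psi,T}$. A Cauchy--Schwarz on each term, then a discrete Cauchy--Schwarz over $T$ and the finite-overlap property of the patches $\tilde\omega_T$, yield $\|e_\psi\|_{\mathcal{E}}^2\lesssim\{\sum_T\eta_{\psi,T}^2\}^{1/2}\|e_\psi\|_{\mathcal{E}}$; dividing through gives (\ref{year10}). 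The factor $1/2$ in $\eta_{\psi,T}$ accounts for each interior face being shared by two elements.

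\emph{Step 3: the bound for $u$ and the final estimate.} For $e_u=u-u_h\in H_0^1(\Omega)$ I would proceed analogously, starting from $|e_u|_1^2=(\nabla u,\nabla e_u)-(\nabla u_h,\nabla e_u)$, using the second equation of (\ref{discrete mixed formulation P1}) to rewrite $(\nabla u,\nabla e_u)=(\psi,e_u)$ and Galerkin orthogonality against $I_h e_u$. This produces the coupling term $(\psi-\psi_h,e_u)$ together with an elementwise residual; the latter is controlled by the \emph{unweighted} Cl\'ement estimates (the second equation carries no $\varepsilon$) in terms of $\eta_{u,T}$, while the coupling term is bounded by $\|\psi-\psi_h\|\,\|e_u\|\le\|e_\psi\|_{\mathcal{E}}\,C|e_u|_1$ via Poincar\'e and $\|\cdot\|\le\|\cdot\|_{\mathcal{E}}$. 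Dividing by $|e_u|_1$ and substituting (\ref{year10}) to replace $\|e_\psi\|_{\mathcal{E}}$ by $\{\sum_T\eta_{\psi,T}^2\}^{1/2}$ gives (\ref{year11}), and (\ref{year12}) then follows at once by adding the squares of (\ref{year10}) and (\ref{year11}). The main obstacle is not any single estimate but guaranteeing uniformity in $\varepsilon$: this hinges on the weights of Lemma \ref{year0} matching those in $\eta_{\psi,T}$, and on the coupling term for $u$ being absorbed through the energy norm of $e_\psi$ rather than through an $\varepsilon$-degenerate quantity. Note that the harder boundary-residual difficulty motivating Lemma \ref{Julylemma3} does not arise here, since for $P_1$ the variable $\psi$ already satisfies a homogeneous Dirichlet condition.
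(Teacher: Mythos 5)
Your proposal is correct and follows essentially the same route as the paper: Galerkin orthogonality with the Cl\'ement interpolant, elementwise integration by parts, the $\varepsilon$-weighted interpolation estimates of Lemma \ref{year0} for the $\psi$-equation and the unweighted ones plus Poincar\'e for the $u$-equation, with the coupling term $(\psi-\psi_h,v)$ absorbed via (\ref{year10}). The only cosmetic difference is that you test directly with the error $e_\psi$ (resp.\ $e_u$) instead of writing the bound as a supremum over all test functions as in (\ref{year18}) and (\ref{year22}), which amounts to the same computation since $e_\psi,e_u\in H_0^1(\Omega)$ for problem $P_1$.
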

\begin{proof}
From the definition of the measure $\|(\psi-\psi_{h},u-u_{h})\|$,
(\ref{year12}) follows from (\ref{year10}) and (\ref{year11}). We
need to prove (\ref{year10}) and (\ref{year11}). We have from the
first equations of (\ref{discrete mixed formulation P1}) and
(\ref{mixed from P1}) that
\begin{equation}\label{year16}
(\varepsilon^{2}\nabla(\psi-\psi_{h}),\nabla\varphi_{h})+(\psi-\psi_{h},\varphi_{h})=0,\
\ \ \forall\ \varphi_{h}\in V_{h}^{0}.
\end{equation}
For any $\varphi\in H_{0}^{1}(\Omega)$, let $\varphi_{h}$ be the
Clem\'{e}nt interpolation of $\varphi$ in $V_{h}^{0}$, i.e.,
$\varphi_{h}=I_{h}\varphi$. Applying integration by parts and
(\ref{year16}), we get
\begin{align}
    \nonumber
    & (\varepsilon^{2}\nabla(\psi-\psi_{h}),\nabla\varphi)+(\psi-\psi_{h},\varphi) \\
    \nonumber
    = & (\varepsilon^{2}\nabla(\psi-\psi_{h}),\nabla(\varphi-\varphi_{h}))+(\psi-\psi_{h},\varphi-\varphi_{h}) \\
    \nonumber
    = & \sum\limits_{T\in\mathcal{T}_{h}}\int_{T}(-\varepsilon^{2}\triangle\psi+\varepsilon^{2}\triangle\psi_{h}+\psi-\psi_{h}) (\varphi-\varphi_{h}) + \int_{\partial T}\varepsilon^{2}\frac{\partial(\psi-\psi_{h})}{\partial{\bf n}}(\varphi-\varphi_{h}) \\
    \nonumber
    = & \sum\limits_{T\in\mathcal{T}_{h}}\int_{T}(f+\varepsilon^{2}\triangle\psi_{h}-\psi_{h}) (\varphi-\varphi_{h})-\int_{\partial T} \varepsilon^{2} \frac{\partial\psi_{h}}{\partial{\bf n}} (\varphi-\varphi_{h})\\
    \label{year17}
    \leq & \sum\limits_{T\in\mathcal{T}_{h}} \Big\{\|f+\varepsilon^{2}\triangle\psi_{h}-\psi_{h}\|_{T} \|\varphi-\varphi_{h}\|_{T} + \frac{1}{2} \sum\limits_{E\in\mathcal{E}_{T}\cap\mathcal{E}_{h}^{0}} \Big\|[\varepsilon^{2}\frac{\partial\psi_{h}}{\partial{\bf n}}]\Big\|_{E} \|\varphi-\varphi_{h}\|_{E} \Big\}.
\end{align}

Notice that
\begin{equation}\label{year18}
\begin{array}{lll}
\|\psi-\psi_{h}\|_{\mathcal{E}}&=&\displaystyle\frac{(\varepsilon^{2}\nabla(\psi-\psi_{h}),\nabla(\psi-\psi_{h}))+
(\psi-\psi_{h},\psi-\psi_{h})}{\|\psi-\psi_{h}\|_{\mathcal{E}}}\vspace{2mm}\\
&\leq&\displaystyle\sup\limits_{0\neq\varphi\in
H_{0}^{1}(\Omega)}\frac{(\varepsilon^{2}\nabla(\psi-\psi_{h}),\nabla\varphi)+(\psi-\psi_{h},\varphi)}{\|\varphi\|_{\mathcal{E}}}.
\end{array}
\end{equation}
The first estimate (\ref{year10}) follows from a combination of
(\ref{year18}), (\ref{year17}), and (\ref{year1})-(\ref{year2}).

We next prove (\ref{year11}). From the second equation of (\ref{discrete mixed formulation P1})
and (\ref{mixed from P1}), we get
\begin{equation}\label{year19}
(\nabla(u-u_{h}),\nabla v_{h})=(\psi-\psi_{h},v_{h}),\ \ \ \forall
v_{h}\in V_{h}^{0}.
\end{equation}
Similarly, we have, for any $v\in H_{0}^{1}(\Omega)$ and $v_{h}=I_{h}v$,
\begin{align}
    \nonumber
    & (\nabla(u-u_{h}),\nabla v)=(\nabla(u-u_{h}),\nabla(v-v_{h}))+(\nabla(u-u_{h}),\nabla v_{h}) \\
    \nonumber
    = & \sum\limits_{T\in\mathcal{T}_{h}}\int_{T}(-\triangle u+\triangle u_{h})(v-v_{h})+\int_{\partial T}\frac{\partial(u-u_{h})}{\partial{\bf n}}(v-v_{h})+(\psi-\psi_{h},v_{h}) \\
    \nonumber
    = & \sum\limits_{T\in\mathcal{T}_{h}}\int_{T}(-\triangle u+\triangle u_{h}-\psi+\psi_{h})(v-v_{h})+\int_{\partial T}\frac{\partial(u-u_{h})}{\partial{\bf n}}(v-v_{h}) + (\psi-\psi_{h},v) \\
    \nonumber
    = & \sum\limits_{T\in\mathcal{T}_{h}}\int_{T}(\triangle u_{h}+\psi_{h})(v-v_{h})-\int_{\partial T}\frac{\partial u_{h}}{\partial{\bf n}}(v-v_{h})+(\psi-\psi_{h},v) \\
    \nonumber
    \leq & \sum\limits_{T\in\mathcal{T}_{h}}\Big\{\|\triangle u_{h}+\psi_{h}\|_{T} \|v-v_{h}\|_{T} + \frac{1}{2} \sum\limits_{E\in\mathcal{E}_{T}\cap\mathcal{E}_{h}^{0}} \Big\|[\frac{\partial u_{h}}{\partial{\bf n}}] \Big\|_{E}\|v-v_{h}\|_{E}\Big\} \\
    \label{year20}
    & + \|\psi-\psi_{h}\|\|v\|.
\end{align}

Recall the following estimates on Clem\'{e}nt interpolation (cf. \cite{Clement}):
\begin{equation}\label{year20+}
\|v-I_{h}v\|_{T}\lesssim h_{T}|v|_{1,\tilde{\omega}_{T}}\ \ \ {\rm
for\ all}\ \ T\in\mathcal{T}_{h},\ \ v\in H^{1}(\tilde{\omega}_{T})
\end{equation}
and
\begin{equation}\label{year20++}
\|v-I_{h}v\|_{E}\lesssim h_{T}^{1/2}|v|_{1,\tilde{\omega}_{T}}\ \ \
{\rm for\ all}\ \ E\in\mathcal{E}_{h}, E\subset\partial T, \ \ v\in
H^{1}(\tilde{\omega}_{T}).
\end{equation}
For any $v\in H_{0}^{1}(\Omega)$, the Poincar\'{e} inequality implies
\begin{equation}\label{year21}
\|v\|\leq\|v\|_{1}\lesssim|v|_{1}.
\end{equation}
A combination of (\ref{year20}) and (\ref{year20+})-(\ref{year21}) yields
\begin{align}
    \nonumber
    & (\nabla(u-u_{h}),\nabla v) \\
    \label{year21+}
    \lesssim & \Big\{\sum\limits_{T\in\mathcal{T}_{h}}\Big(h_{T}^{2}\|\triangle u_{h}+\psi_{h}\|_{T}^{2}+ \frac{1}{2} \sum\limits_{E\in\mathcal{E}_{T}\cap\mathcal{E}_{h}^{0}}h_{T} \Big\|[\frac{\partial u_{h}}{\partial{\bf n}}]\Big\|_{E}^{2} \Big) + \|\psi-\psi_{h}\| \Big\} |v|_{1}.
\end{align}

Notice that
\begin{equation}\label{year22}
|u-u_{h}|_{1}\leq\displaystyle\sup\limits_{0\neq v\in H_{0}^{1}(\Omega)}\frac{(\nabla(u-u_{h}),\nabla v)}{|v|_{1}}.
\end{equation}
A combination of (\ref{year22}), (\ref{year21+}), and (\ref{year10})
yields
\begin{equation*}
|u-u_{h}|_{1}\lesssim\Big\{\displaystyle\sum\limits_{T\in\mathcal{T}_{h}}\eta_{u,T}^{2}+\eta_{\psi,T}^{2}\Big\}^{1/2}.
\end{equation*}
This completes the proof of (\ref{year11}).
\end{proof}

\begin{theorem}
Let $(\psi,u)\in H^{1}(\Omega)\times H_{0}^{1}(\Omega)$ and
$(\psi_{h},u_{h})\in V_{h}\times V_{h}^{0}$ be the solutions to
(\ref{discrete mixed formulation P2}) and (\ref{mixed form P2}),
respectively. If
$-\varepsilon^{2}\triangle(\psi-\psi_{h})+(\psi-\psi_{h})\neq0$,
then there exist positive constants $C_{4},C_{5}$, and $C_{6}$,
independent of the mesh-size function $h$ and $\varepsilon$, such
that
\begin{equation}\label{year13}
\|\psi-\psi_{h}\|_{\mathcal{E}}\leq
C_{4}\displaystyle\Big\{\sum\limits_{T\in\mathcal{T}_{h}}\eta_{\psi,T}^{2}\Big\}^{1/2},
\end{equation}
\begin{equation}\label{year14}
|u-u_{h}|_{1}\leq
C_{5}\displaystyle\Big\{\sum\limits_{T\in\mathcal{T}_{h}}\eta_{\psi,T}^{2}+\eta_{u,T}^{2}\Big\}^{1/2},
\end{equation}
\begin{equation}\label{year15}
\|(\psi-\psi_{h},u-u_{h})\|\leq
C_{6}\displaystyle\Big\{\sum\limits_{T\in\mathcal{T}_{h}}\eta_{\psi,T}^{2}+\eta_{u,T}^{2}\Big\}^{1/2}.
\end{equation}
\end{theorem}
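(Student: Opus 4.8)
The plan is to recycle the residual machinery from the previous theorem for both discrete equations of $P_2$, inserting the approximation result of Lemma \ref{Julylemma3} at the single place where the argument for $P_1$ fails, namely that $\psi-\psi_h$ no longer belongs to $H_0^1(\Omega)$. Throughout write $\langle w,\varphi\rangle_{\mathcal{E}}:=(\varepsilon^2\nabla w,\nabla\varphi)+(w,\varphi)$, so that $\langle w,w\rangle_{\mathcal{E}}=\|w\|_{\mathcal{E}}^2$.

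\emph{Energy estimate (\ref{year13}).} The first equations of (\ref{discrete mixed formulation P2}) and (\ref{mixed form P2}) are identical to those of $P_1$ and are tested over $H_0^1(\Omega)$ and $V_h^0$, so the Galerkin orthogonality (\ref{year16}) still holds. Consequently, for every $\varphi\in H_0^1(\Omega)$ with $\varphi_h=I_h\varphi\in V_h^0$, the integration-by-parts computation (\ref{year17}) and the weighted Cl\'ement estimates (\ref{year1})--(\ref{year2}) give, verbatim,
\begin{equation}\label{resbound}
\langle\psi-\psi_h,\varphi\rangle_{\mathcal{E}}\leq C\Big\{\sum_{T\in\mathcal{T}_h}\eta_{\psi,T}^2\Big\}^{1/2}\,\|\varphi\|_{\mathcal{E}},\qquad\forall\,\varphi\in H_0^1(\Omega).
\end{equation}
The obstacle is that I cannot take $\varphi=\psi-\psi_h$ in (\ref{resbound}) as was done in (\ref{year18}), since $\psi-\psi_h\in H^1(\Omega)\setminus H_0^1(\Omega)$. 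This is exactly where the hypothesis $-\varepsilon^2\triangle(\psi-\psi_h)+(\psi-\psi_h)\neq0$ is used. Setting $v=\psi-\psi_h$, Lemma \ref{Julylemma3} supplies $\gamma\in(0,1)$ together with the $\mathcal{E}$-orthogonal projection $\tilde v\in H_0^1(\Omega)$ of $v$ (the minimizer constructed in the proof of Lemma \ref{Julylemma1}), characterized by $\langle v-\tilde v,\phi\rangle_{\mathcal{E}}=0$ for all $\phi\in H_0^1(\Omega)$ and satisfying $\|v-\tilde v\|_{\mathcal{E}}^2\leq\gamma\|v\|_{\mathcal{E}}^2$. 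Choosing $\phi=\tilde v$ gives $\langle v,\tilde v\rangle_{\mathcal{E}}=\|\tilde v\|_{\mathcal{E}}^2$, while the Pythagorean identity $\|v\|_{\mathcal{E}}^2=\|v-\tilde v\|_{\mathcal{E}}^2+\|\tilde v\|_{\mathcal{E}}^2$ yields $\|\tilde v\|_{\mathcal{E}}^2\geq(1-\gamma)\|v\|_{\mathcal{E}}^2$. Applying (\ref{resbound}) to the admissible test function $\varphi=\tilde v$ and cancelling one factor of $\|\tilde v\|_{\mathcal{E}}$ gives $\|\tilde v\|_{\mathcal{E}}\lesssim\{\sum_T\eta_{\psi,T}^2\}^{1/2}$; combined with the lower bound on $\|\tilde v\|_{\mathcal{E}}$ this produces (\ref{year13}) with $C_4=C/\sqrt{1-\gamma}$.

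\emph{Estimates (\ref{year14}) and (\ref{year15}).} These require nothing new. Since $u$ and $u_h$ both vanish on $\Gamma$, testing the second equations of (\ref{discrete mixed formulation P2}) and (\ref{mixed form P2}) against $v_h\in V_h^0\subset V_h$ reproduces the error relation (\ref{year19}), whence the chain (\ref{year20})--(\ref{year21+}), the standard Cl\'ement estimates (\ref{year20+})--(\ref{year21}), and the duality (\ref{year22}) over $H_0^1(\Omega)$ carry over unchanged; substituting the already-established bound (\ref{year13}) for $\|\psi-\psi_h\|\leq\|\psi-\psi_h\|_{\mathcal{E}}$ gives (\ref{year14}). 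Finally (\ref{year15}) is immediate from (\ref{year13}), (\ref{year14}), and the definition of $\|(\psi-\psi_h,u-u_h)\|$.

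The crux is the single step for (\ref{year13}): recovering control of the full $\mathcal{E}$-norm of $\psi-\psi_h$ from a residual bound that is only available against $H_0^1(\Omega)$ test functions, when the error itself escapes this space. Lemma \ref{Julylemma3} resolves this through its $\mathcal{E}$-orthogonal minimizer. The one delicate point is that $C_4$ scales like $(1-\gamma)^{-1/2}$, so to claim genuine $\varepsilon$-independence one must verify that the $\gamma$ furnished by Lemma \ref{Julylemma3} for $v=\psi-\psi_h$ can be bounded away from $1$ uniformly in $\varepsilon$.
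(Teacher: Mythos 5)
Your proof is correct and follows essentially the same route as the paper: both establish the residual bound against $H_0^1(\Omega)$ test functions via the Galerkin orthogonality, integration by parts, and the weighted Cl\'ement estimates, and both invoke Lemma \ref{Julylemma3} to transfer control from the admissible test function $\tilde v$ back to $\psi-\psi_h$; your Pythagorean/orthogonality bookkeeping (yielding $C_4\sim(1-\gamma)^{-1/2}$) is a slightly cleaner variant of the paper's splitting $v=(v-\tilde v)+\tilde v$ followed by absorption of the term $\sqrt{\gamma}\,\|\psi-\psi_h\|_{\mathcal{E}}$ (yielding $C_4\sim(1-\sqrt{\gamma})^{-1}$). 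The delicate point you flag at the end --- that the $\gamma$ supplied by Lemma \ref{Julylemma3} for $v=\psi-\psi_h$ must be bounded away from $1$ uniformly in $\varepsilon$ and $h$ for the claimed independence of $C_4$ --- is genuine, and the paper's own proof leaves it equally unaddressed.
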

\begin{proof}
We have from
$-\varepsilon^{2}\triangle(\psi-\psi_{h})+(\psi-\psi_{h})\neq0$
\begin{equation}\label{year23}
\|\psi-\psi_{h}\|_{\mathcal{E}}\leq\displaystyle\sup\limits_{v\in
H^{1}(\Omega),-\varepsilon^{2}\triangle
v+v\neq0}\frac{(\varepsilon^{2}\nabla(\psi-\psi_{h}),\nabla
v)+(\psi-\psi_{h},v)}{\|v\|_{\mathcal{E}}}.
\end{equation}
For $v\in H^{1}(\Omega)$ satisfying $-\varepsilon^{2}\triangle
v+v\neq0$, from the proofs of Lemmas \ref{Julylemma1} and
\ref{Julylemma3}, there exist $\tilde{v}\in H_{0}^{1}(\Omega)$ and
$\gamma\in(0,1)$, such that
\begin{equation}\label{year24}
\begin{array}{lll}
\varepsilon^{2}|v-\tilde{v}|_{1}^{2}+||v-\tilde{v}||^{2}&=&\displaystyle\inf_{w\in
H_{0}^{1}(\Omega)}(\varepsilon^{2}|v-w|_{1}^{2}+||v-w||^{2})\vspace{2mm}\\
&\leq&\gamma(\varepsilon^{2}|v|_{1}^{2}+||v||^{2})=\gamma\|v\|_{\mathcal{E}}^{2}.
\end{array}
\end{equation}

Let $\tilde{v}_{h}=I_{h}\tilde{v}$ be the Clem\'{e}nt interpolation
of $\tilde{v}$ in $V_{h}^{0}$. From the first equation of
(\ref{discrete mixed formulation P2}) and (\ref{mixed form P2}), we
have
\begin{equation}\label{year25}
(\varepsilon^{2}\nabla(\psi-\psi_{h}),\nabla\tilde{v}_{h})+(\psi-\psi_{h},\tilde{v}_{h})=0.
\end{equation}

From (\ref{year25}), we have
\begin{equation}\label{year25+}
\begin{array}{lll}
&\ &(\varepsilon^{2}\nabla(\psi-\psi_{h}),\nabla v)+(\psi-\psi_{h},v)=
(\varepsilon^{2}\nabla(\psi-\psi_{h}),\nabla(v-\tilde{v}))+(\psi-\psi_{h},v-\tilde{v})\vspace{2mm}\\
&\ &\ \hspace{54mm}+(\varepsilon^{2}\nabla(\psi-\psi_{h}),\nabla\tilde{v})+(\psi-\psi_{h},\tilde{v})\vspace{2mm}\\
&\ &\
\leq\|\psi-\psi_{h}\|_{\mathcal{E}}\|v-\tilde{v}\|_{\mathcal{E}}+
(\varepsilon^{2}\nabla(\psi-\psi_{h}),\nabla(\tilde{v}-\tilde{v}_{h}))+(\psi-\psi_{h},\tilde{v}-\tilde{v}_{h}).
\end{array}
\end{equation}
Repeating the proof of (\ref{year17}), and applying (\ref{year1})-(\ref{year2}), we have
\begin{equation}\label{year26}
\begin{array}{lll}
&\ &(\varepsilon^{2}\nabla(\psi-\psi_{h}),\nabla(\tilde{v}-\tilde{v}_{h}))+(\psi-\psi_{h},\tilde{v}-\tilde{v}_{h})\vspace{2mm}\\
&\
&\leq\displaystyle\sum\limits_{T\in\mathcal{T}_{h}}\Big\{\|f+\varepsilon^{2}\triangle\psi_{h}-\psi_{h}\|_{T}
\|\tilde{v}-\tilde{v}_{h}\|_{T}+
\frac{1}{2}\sum\limits_{E\in\mathcal{E}_{T}\cap\mathcal{E}_{h}^{0}} \Big\|[\varepsilon^{2}\frac{\partial\psi_{h}}{\partial{\bf
n}}]\Big\|_{E}\|\tilde{v}-\tilde{v}_{h}\|_{E}\Big\}\vspace{2mm}\\
&\ &\leq\displaystyle
C\Big\{\sum\limits_{T\in\mathcal{T}_{h}}\eta_{\psi,
T}^{2}\Big\}^{1/2}\|\tilde{v}\|_{\mathcal{E}}.
\end{array}
\end{equation}
Using the triangle inequality and (\ref{year24}), we have
\begin{equation}\label{year27}
\|\tilde{v}\|_{\mathcal{E}}^{2}\lesssim\|v-\tilde{v}\|_{\mathcal{E}}^{2}+\|v\|_{\mathcal{E}}^{2}\lesssim\|v\|_{\mathcal{E}}^{2}.
\end{equation}

A combination of (\ref{year25+}), (\ref{year24}), (\ref{year26}), and (\ref{year27}) yields
\begin{equation}\label{year28}
(\varepsilon^{2}\nabla(\psi-\psi_{h}),\nabla
v)+(\psi-\psi_{h},v)\leq\Big\{\sqrt{\gamma}\|\psi-\psi_{h}\|_{\mathcal{E}}+
C\displaystyle\Big(\sum\limits_{T\in\mathcal{T}_{h}}\eta_{\psi,T}^{2}\Big)^{1/2}\Big\}\|v\|_{\mathcal{E}}.
\end{equation}
From (\ref{year23}) and (\ref{year28}), we obtain
\begin{equation*}
\|\psi-\psi_{h}\|_{\mathcal{E}}\leq\sqrt{\gamma}\|\psi-\psi_{h}\|_{\mathcal{E}}+
C\displaystyle\Big(\sum\limits_{T\in\mathcal{T}_{h}}\eta_{\psi,T}^{2}\Big)^{1/2},
\end{equation*}
which leads to the desired estimate (\ref{year13}). Repeating the proof of (\ref{year11}) and (\ref{year12}),
we obtain (\ref{year14}) and (\ref{year15}).
\end{proof}
\begin{remark}
The condition
$-\varepsilon^{2}\triangle(\psi-\psi_{h})+(\psi-\psi_{h})\neq0$ is
usually satisfied, since
\begin{equation*}
-\varepsilon^{2}\triangle(\psi-\psi_{h})+(\psi-\psi_{h})=f-(-\varepsilon^{2}\triangle\psi_{h}+\psi_{h})
\end{equation*}
is the residual, which doesn't vanish in usual. Here
$\triangle\psi_{h}$ is the piecewise Laplacian of $\psi_{h}$.
\end{remark}

\section{The analysis of the efficiency on the estimators}
In this section, we analyze the efficiency of the {\it a posteriori} error estimates developed in Section 4. To
avoid the appearance of high order term, we assume that $f$ is a piecewise polynomial.
\begin{lemma}\label{year29}
For all $T\in\mathcal{T}_{h}$, there hold
\begin{equation}\label{year30}
\alpha_{T}\|f+\varepsilon^{2}\triangle\psi_{h}-\psi_{h}\|_{T}\lesssim\|\psi-\psi_{h}\|_{\mathcal{E},T}
\end{equation}
and
\begin{equation}\label{year31}
h_{T}\|\triangle
u_{h}+\psi_{h}\|_{T}\lesssim|u-u_{h}|_{1,T}+h_{T}\|\psi-\psi_{h}\|_{T}.
\end{equation}
\end{lemma}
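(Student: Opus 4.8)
The plan is to establish the two efficiency (lower bound) estimates by the standard bubble-function technique of Verf\"urth, using the inverse-type estimates collected in Lemma \ref{year3}. The key observation for (\ref{year30}) is that the quantity $f+\varepsilon^{2}\triangle\psi_{h}-\psi_{h}$ is, up to sign, the elementwise residual of the first equation, since $f=-\varepsilon^{2}\triangle\psi+\psi$ gives $f+\varepsilon^{2}\triangle\psi_{h}-\psi_{h}=-\varepsilon^{2}\triangle(\psi-\psi_{h})+(\psi-\psi_{h})$ on each $T$ (in the strong sense on $T$, since $\psi_h$ is a polynomial there and $f$ is assumed piecewise polynomial, so both sides lie in $\mathcal{P}_k$).

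First I would abbreviate $R_T:=f+\varepsilon^{2}\triangle\psi_{h}-\psi_{h}\in\mathcal{P}_k$ on $T$ and test with the bubble $\psi_T R_T$. By (\ref{year4}),
\begin{equation*}
\|R_T\|_T^2\lesssim (R_T,\psi_T R_T)_T
= \bigl(-\varepsilon^{2}\triangle(\psi-\psi_{h})+(\psi-\psi_{h}),\,\psi_T R_T\bigr)_T.
\end{equation*}
Since $\psi_T R_T$ vanishes on $\partial T$ (so it extends by zero to $H_0^1$ supported on $T$), integration by parts moves the Laplacian off $\psi-\psi_h$ without boundary terms, yielding
\begin{equation*}
\|R_T\|_T^2\lesssim \varepsilon^{2}\bigl(\nabla(\psi-\psi_h),\nabla(\psi_T R_T)\bigr)_T+(\psi-\psi_h,\psi_T R_T)_T
\le \|\psi-\psi_h\|_{\mathcal{E},T}\,\|\psi_T R_T\|_{\mathcal{E},T}.
\end{equation*}
Now (\ref{year6}) gives $\|\psi_T R_T\|_{\mathcal{E},T}\lesssim\alpha_T^{-1}\|R_T\|_T$; dividing through by $\|R_T\|_T$ and multiplying by $\alpha_T$ produces exactly (\ref{year30}). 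The role of the $\varepsilon$-weighted bubble estimate (\ref{year6}) is precisely what makes the weight $\alpha_T$ come out robustly, and this is the one place where one must be careful to keep the $\varepsilon$-dependence balanced rather than resorting to a naive inverse inequality.

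For (\ref{year31}) the argument is the classical reaction–diffusion one for the second equation. I set $S_T:=\triangle u_h+\psi_h$; testing with $\psi_T S_T$ and using (\ref{year4}) gives $\|S_T\|_T^2\lesssim(S_T,\psi_T S_T)_T$. Writing $\triangle u_h+\psi_h=\triangle(u_h-u)+(\psi_h-\psi)$ (using $-\triangle u=\psi$), integrating by parts on $T$ against the zero-boundary bubble, and applying Cauchy–Schwarz yields
\begin{equation*}
\|S_T\|_T^2\lesssim |u-u_h|_{1,T}\,|\psi_T S_T|_{1,T}+\|\psi-\psi_h\|_T\,\|\psi_T S_T\|_T.
\end{equation*}
A standard inverse inequality $|\psi_T S_T|_{1,T}\lesssim h_T^{-1}\|S_T\|_T$ together with (\ref{year5}) then gives (\ref{year31}) after multiplying by $h_T$ and dividing by $\|S_T\|_T$. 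I expect the main obstacle to be bookkeeping the correct $\varepsilon$-weights in the first estimate: one must use the sharp bubble estimate (\ref{year6}) (and not merely (\ref{year5})) so that the factor $\alpha_T=\min\{h_T\varepsilon^{-1},1\}$ appears and the bound stays uniform in $\varepsilon$; the second estimate, being $\varepsilon$-free, is routine by comparison.
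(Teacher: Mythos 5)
Your proposal is correct and follows essentially the same route as the paper: both estimates are proved by testing the elementwise residual against the bubble function $\psi_T$ times the residual, using (\ref{year4}) for the lower bound, integrating by parts against the zero-boundary bubble, and then invoking the $\varepsilon$-weighted bubble estimate (\ref{year6}) for (\ref{year30}) and the standard inverse inequality together with (\ref{year5}) for (\ref{year31}). Your remarks on where the weight $\alpha_T$ enters match the paper's argument exactly.
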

\begin{proof}
We first prove (\ref{year30}). To this end, let $v=f+\varepsilon^{2}\triangle\psi_{h}-\psi_{h}$. Recall
the bubble function $\psi_{T}$ introduced in Section 3.
From (\ref{year4}), integration by parts, and (\ref{year6}), we have
\begin{equation*}
\begin{array}{lll}
\|v\|_{T}^{2}&\lesssim&(\psi_{T}v,v)_{T}\vspace{2mm}\\
&=&(-\varepsilon^{2}\triangle(\psi-\psi_{h}),\psi_{T}v)_{T}+(\psi-\psi_{h},\psi_{T}v)_{T}\vspace{2mm}\\
&=&\varepsilon^{2}(\nabla(\psi-\psi_{h}),\nabla(\psi_{T}v))_{T}+(\psi-\psi_{h},\psi_{T}v)_{T}\vspace{2mm}\\
&\leq&\|\psi-\psi_{h}\|_{\mathcal{E},T}\|\psi_{T}v\|_{\mathcal{E},T}\vspace{2mm}\\
&\lesssim&\|\psi-\psi_{h}\|_{\mathcal{E},T}\alpha_{T}^{-1}\|v\|_{T}.
\end{array}
\end{equation*}
The desired estimate (\ref{year30}) follows.

We next prove (\ref{year31}). For convenience, denote $v=\triangle u_{h}+\psi_{h}$. Similarly, we have
from $\psi=-\triangle u$ that
\begin{equation*}
\begin{array}{lll}
\|v\|_{T}^{2}&\lesssim&(\psi_{T}v,v)_{T}\vspace{2mm}\\
&=&(\triangle u_{h}-\triangle u+\triangle u+\psi_{h},\psi_{T}v)_{T}\vspace{2mm}\\
&=&-(\triangle(u-u_{h}),\psi_{T}v)_{T}-(\psi-\psi_{h},\psi_{T}v)_{T}\vspace{2mm}\\
&=&(\nabla(u-u_{h}),\nabla(\psi_{T}v))_{T}-(\psi-\psi_{h},\psi_{T}v)_{T}\vspace{2mm}\\
&\leq&|u-u_{h}|_{1,T}|\psi_{T}v|_{1,T}+\|\psi-\psi_{h}\|_{T}\|\psi_{T}v\|_{T}.
\end{array}
\end{equation*}
Applying inverse estimate and (\ref{year5}), we have
\begin{equation*}
\|v\|_{T}^{2}\lesssim(h_{T}^{-1}|u-u_{h}|_{1,T}+\|\psi-\psi_{h}\|_{T})\|v\|_{T}.
\end{equation*}
The estimate (\ref{year31}) follows immediately.
\end{proof}

\begin{lemma}\label{year32}
For all $E\in\mathcal{E}_{h}^{0}$, there hold
\begin{equation}\label{year33}
    \alpha_{E} \Big \|[\varepsilon^{2} \frac{\partial\psi_{h}}{\partial{\bf n}}]\Big\|_{E} \lesssim\|\psi-\psi_{h}\|_{\mathcal{E},\omega_{E}}
\end{equation}
and
\begin{equation}\label{year34}
    h_{E}^{1/2} \Big\| [\frac{\partial u_{h}}{\partial{\bf n}}] \Big\|_{E} \lesssim|u-u_{h}|_{1,\omega_{E}} + h_{E}\|\psi-\psi_{h}\|_{\omega_{E}}.
\end{equation}
\end{lemma}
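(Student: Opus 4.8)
The plan is to prove both edge estimates \eqref{year33} and \eqref{year34} by the standard Verf\"urth bubble-function technique, testing the jump against the edge bubble $\psi_{E,\theta_E}$ extended over $\omega_E$ via the continuation operator $P_E$. The two estimates differ only in which residual equation (the first or the second of \eqref{discrete mixed formulation P1}) we integrate by parts, and in whether we use the $\varepsilon$-weighted tools \eqref{year7}--\eqref{year9} or the plain $H^1$ estimates \eqref{year20+}--\eqref{year20++}.

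For \eqref{year33}, I would set $\sigma := [\varepsilon^2 \partial\psi_h/\partial\mathbf{n}]_E$ and $w := \psi_{E,\theta_E} P_E \sigma \in H^1_0(\omega_E)$. By the lower bound \eqref{year7}, $\|\sigma\|_E^2 \lesssim (\sigma, \psi_{E,\theta_E} P_E \sigma)_E = (\sigma, w)_E$. Since $w$ is supported in $\omega_E$, I can write this edge term as a sum of boundary integrals over the two elements sharing $E$, and then integrate by parts on each element to convert it into the exact residual equation. Concretely, using $-\varepsilon^2\triangle\psi + \psi = f$ on each $T$ and $w=0$ on $\partial\omega_E$, the jump integral becomes
\begin{equation*}
(\sigma,w)_E = \sum_{T\subset\omega_E}\Big\{ (\varepsilon^2\nabla(\psi-\psi_h),\nabla w)_T + (\psi-\psi_h,w)_T - (f+\varepsilon^2\triangle\psi_h-\psi_h,w)_T \Big\}.
\end{equation*}
The first two terms are bounded by $\|\psi-\psi_h\|_{\mathcal{E},\omega_E}\|w\|_{\mathcal{E},\omega_E}$, and the volume residual by $\|f+\varepsilon^2\triangle\psi_h-\psi_h\|_{\omega_E}\|w\|_{\omega_E}$; applying \eqref{year8}--\eqref{year9} to estimate $\|w\|_{\omega_E}$ and $\|w\|_{\mathcal{E},\omega_E}$ in terms of $\|\sigma\|_E$, dividing through by $\|\sigma\|_E$, and finally absorbing the volume-residual contribution via the already-proven element estimate \eqref{year30} yields \eqref{year33}, after tracking that the weight $\alpha_E$ emerges from the factor $\varepsilon^{1/2}\min\{h_E\varepsilon^{-1},1\}^{-1/2}$ in \eqref{year9}.

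For \eqref{year34}, the argument is parallel but simpler. Setting $\sigma := [\partial u_h/\partial\mathbf{n}]_E$ and $w := \psi_{E,\theta_E}P_E\sigma$, I use \eqref{year7} to get $\|\sigma\|_E^2 \lesssim (\sigma,w)_E$, then integrate by parts using $-\triangle u = \psi$ on each element to obtain $(\sigma,w)_E = \sum_{T\subset\omega_E}\{(\nabla(u-u_h),\nabla w)_T - (\psi-\psi_h,w)_T + (\triangle u_h+\psi_h, w)_T\}$. Bounding by $|u-u_h|_{1,\omega_E}|w|_{1,\omega_E} + \|\psi-\psi_h\|_{\omega_E}\|w\|_{\omega_E} + \|\triangle u_h+\psi_h\|_{\omega_E}\|w\|_{\omega_E}$, then using the plain scaling estimates for $\|w\|_{\omega_E}$ and $|w|_{1,\omega_E}$ in terms of $h_E^{1/2}\|\sigma\|_E$ and $h_E^{-1/2}\|\sigma\|_E$ respectively, and finally invoking the volume estimate \eqref{year31} to dispose of the last term, gives \eqref{year34}.

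\textbf{The main obstacle} I anticipate is the $\varepsilon$-dependent bookkeeping in \eqref{year33}: one must verify that the weight $\alpha_E = \varepsilon^{-1/2}\min\{h_E\varepsilon^{-1},1\}$ produced by combining the energy-norm bound with \eqref{year9} matches the definition of $\alpha_E$ in Lemma \ref{year0}, and that absorbing the interior residual through \eqref{year30} does not introduce an uncontrolled $\varepsilon$-power. Because $\theta_E = \min\{\varepsilon h_E^{-1},1\}$ appears in the bubble scaling, the two regimes $h_E \lesssim \varepsilon$ and $h_E \gtrsim \varepsilon$ must be checked to collapse into the single weight $\alpha_E$; this robustness verification is the only genuinely delicate part, the rest being the routine bubble-function machinery of \cite{Verfurth2}.
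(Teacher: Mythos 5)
Your proposal follows essentially the same route as the paper: edge bubble plus continuation operator, the lower bound (\ref{year7}), elementwise integration by parts against the exact equations, the $\varepsilon$-weighted scalings (\ref{year8})--(\ref{year9}) for the $\psi$-jump and the plain scalings for the $u$-jump, and absorption of the interior residuals via (\ref{year30}) and (\ref{year31}); the $\alpha_E$ bookkeeping works out exactly as you anticipate. One small point: for (\ref{year34}) you should take the \emph{unsqueezed} bubble ($\theta_E=1$, as the paper does), since with $\theta_E=\min\{\varepsilon h_E^{-1},1\}$ the bound $|w|_{1,\omega_E}\lesssim h_E^{-1/2}\|\sigma\|_E$ you invoke fails when $\varepsilon\ll h_E$ — your stated scalings are precisely the $\theta=1$ ones, so this is only a notational slip.
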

\begin{proof}
We first prove (\ref{year33}). To this end,
let $\sigma=[\varepsilon^{2}\displaystyle\frac{\partial\psi_{h}}{\partial{\bf n}}]$.
Recall the bubble function $\psi_{E,\theta_{E}}$ and the extension operator $P_{E}$ introduced in Section 3.
Let $v_{E}=\psi_{E,\theta_{E}}P_{E}\sigma$. An application of integration by parts leads to
\begin{equation*}
\begin{array}{ll}
    &(\varepsilon^{2}\nabla(\psi-\psi_{h}),\nabla v_{E})_{\omega_{E}}+(\psi-\psi_{h},v_{E})_{\omega_{E}}\vspace{2mm}\\
    = &(f+\varepsilon^{2}\triangle_{h}\psi_{h}-\psi_{h},v_{E})_{\omega_{E}} + \Big(-[\varepsilon^{2}\displaystyle\frac{\partial\psi_{h}}{\partial{\bf n}}],v_{E} \Big)_{E},
\end{array}
\end{equation*}
where $\triangle_{h}$ is the elementwise Laplace operator. A combination of the above equality
and (\ref{year7})-(\ref{year9}) leads to
\begin{equation*}
\begin{array}{lll}
\|\sigma\|_{E}^{2}&\lesssim& \Big( [\varepsilon^{2}\displaystyle\frac{\partial\psi_{h}}{\partial{\bf n}}],v_{E} \Big)_{E}\vspace{2mm}\\
&=&(f+\varepsilon^{2}\triangle_{h}\psi_{h}-\psi_{h},v_{E})_{\omega_{E}}\vspace{2mm}\\
&\ &\ \ -(\varepsilon^{2}\nabla(\psi-\psi_{h}),\nabla v_{E})_{\omega_{E}}-(\psi-\psi_{h},v_{E})_{\omega_{E}}\vspace{2mm}\\
&\lesssim&\|f+\varepsilon^{2}\triangle_{h}\psi_{h}-\psi_{h}\|_{\omega_{E}}\|v_{E}\|_{\omega_{E}}+
\|\psi-\psi_{h}\|_{\mathcal{E},\omega_{E}}\|v_{E}\|_{\mathcal{E},\omega_{E}}\vspace{2mm}\\
&\lesssim&\varepsilon^{1/2}\min\{1,h_{E}\varepsilon^{-1}\}^{1/2}\|f+\varepsilon^{2}\triangle_{h}\psi_{h}-
\psi_{h}\|_{\omega_{E}}\|\sigma\|_{E}\vspace{2mm}\\
&\ &\ \
+\|\psi-\psi_{h}\|_{\mathcal{E},\omega_{E}}\varepsilon^{1/2}\min\{1,h_{E}\varepsilon^{-1}\}^{-1/2}\|\sigma\|_{E}.
\end{array}
\end{equation*}
By the definition of $\alpha_{E}$ for $E\subset\partial T$ and the local shape regularity of the mesh, we obtain
from the above inequality that
\begin{equation*}
\begin{array}{lll}
\alpha_{E}\|\sigma\|_{E}&\lesssim&\varepsilon^{1/2}\min\{1,h_{E}\varepsilon^{-1}\}^{1/2}
\varepsilon^{-1/2}\min\{1,h_{T}\varepsilon^{-1}\}^{1/2}\|f+\varepsilon^{2}\triangle_{h}\psi_{h}-
\psi_{h}\|_{\omega_{E}}\vspace{2mm}\\
&\ &\ \
+\|\psi-\psi_{h}\|_{\mathcal{E},\omega_{E}}\varepsilon^{1/2}\min\{1,h_{E}\varepsilon^{-1}\}^{-1/2}
\varepsilon^{-1/2}\min\{1,h_{T}\varepsilon^{-1}\}^{1/2}\vspace{2mm}\\
&\lesssim&\min\{1,h_{T}\varepsilon^{-1}\}\|f+\varepsilon^{2}\triangle_{h}\psi_{h}-
\psi_{h}\|_{\omega_{E}}+\|\psi-\psi_{h}\|_{\mathcal{E},\omega_{E}}\vspace{2mm}\\
&\lesssim&\|\psi-\psi_{h}\|_{\mathcal{E},\omega_{E}}.
\end{array}
\end{equation*}
In the last step, estimate (\ref{year30}) is used. We complete the
proof of (\ref{year33}).

We next prove (\ref{year34}). For convenience, denote $\sigma=[\displaystyle\frac{\partial u_{h}}{\partial{\bf n}}]$
and $v_{E}=\psi_{E}P_{E}\sigma$, where $\psi_{E}=\psi_{E,\theta_{E}}$ for $\theta_{E}=1$. Similarly, we have
\begin{equation*}
(\nabla(u-u_{h}),\nabla v_{E})_{\omega_{E}}=(-\triangle u+\triangle_{h}u_{h},v_{E})_{\omega_{E}}-
\Big( [\frac{\partial u_{h}}{\partial{\bf n}}],v_{E} \Big)_{E},
\end{equation*}
which leads to the following estimate:
\begin{equation*}
\begin{array}{lll}
\|\sigma\|_{E}^{2}&\lesssim& \Big( \displaystyle[\frac{\partial u_{h}}{\partial{\bf n}}],v_{E} \Big)\vspace{2mm}\\
&=&(\psi+\triangle_{h}u_{h},v_{E})_{\omega_{E}}-(\nabla(u-u_{h}),\nabla v_{E})_{\omega_{E}}\vspace{2mm}\\
&=&(\psi-\psi_{h},v_{E})_{\omega_{E}}+(\triangle_{h}u_{h}+\psi_{h},v_{E})_{\omega_{E}}-(\nabla(u-u_{h}),\nabla v_{E})_{\omega_{E}}\vspace{2mm}\\
&\lesssim&\|\psi-\psi_{h}\|_{\omega_{E}}h_{E}^{1/2}\|\sigma\|_{E}+h_{E}^{1/2}\|\triangle_{h}u_{h}
+\psi_{h}\|_{\omega_{E}}\|\sigma\|_{E}\vspace{2mm}\\
&\ &\ +|u-u_{h}|_{1,\omega_{E}}h_{E}^{-1/2}\|\sigma\|_{E}.
\end{array}
\end{equation*}
We obtain from the above inequality that
\begin{equation*}
\begin{array}{lll}
h_{E}^{1/2}\|\sigma\|_{E}&\lesssim&h_{E}\|\psi-\psi_{h}\|_{\omega_{E}}+h_{E}\|\triangle_{h}u_{h}
+\psi_{h}\|_{\omega_{E}}+|u-u_{h}|_{1,\omega_{E}}\vspace{2mm}\\
&\lesssim&h_{E}\|\psi-\psi_{h}\|_{\omega_{E}}+|u-u_{h}|_{1,\omega_{E}}.
\end{array}
\end{equation*}
In the last step above, we employ the estimate (\ref{year31}). We complete the proof of (\ref{year34}).
\end{proof}

\begin{theorem}
Let $(\psi,u)\in H_{0}^{1}(\Omega)\times H_{0}^{1}(\Omega)$ and
$(\psi_{h},u_{h})\in V_{h}^{0}\times V_{h}^{0}$ be the solutions to
(\ref{discrete mixed formulation P1}) and (\ref{mixed from P1}),
respectively. Then there exist positive constants $C_{7}$ and
$C_{8}$, independent of the mesh-size function $h$ and
$\varepsilon$, such that
\begin{equation*}
\displaystyle
C_{7}\Big\{\sum\limits_{T\in\mathcal{T}_{h}}\eta_{\psi,T}^{2}\Big\}^{1/2}\leq\|\psi-\psi_{h}\|_{\mathcal{E}}
\end{equation*}
and
\begin{equation*}
\displaystyle
C_{8}\Big\{\sum\limits_{T\in\mathcal{T}_{h}}\eta_{\psi,T}^{2}+\eta_{u,T}^{2}\Big\}^{1/2}\leq\|(\psi-\psi_{h},u-u_{h})\|.
\end{equation*}
\end{theorem}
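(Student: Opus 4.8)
The plan is to assemble the local efficiency estimates of Lemmas~\ref{year29} and~\ref{year32} into the two global lower bounds. Both inequalities follow the same pattern: square the indicators, apply the element- and edge-wise bounds already established, and then sum over the mesh using a finite-overlap argument. First I would treat the $\psi$-indicator. Squaring $\eta_{\psi,T}$ and using (\ref{year30}) on the volume residual together with (\ref{year33}) on each interior edge jump gives
\begin{equation*}
\eta_{\psi,T}^{2}\lesssim\|\psi-\psi_{h}\|_{\mathcal{E},T}^{2}+\sum_{E\in\mathcal{E}_{T}\cap\mathcal{E}_{h}^{0}}\|\psi-\psi_{h}\|_{\mathcal{E},\omega_{E}}^{2}.
\end{equation*}
Summing over all $T\in\mathcal{T}_{h}$, each fixed element belongs to only a bounded number of edge patches $\omega_{E}$ by shape regularity, so the finite-overlap property collapses the right-hand side to the global energy norm, yielding $\sum_{T}\eta_{\psi,T}^{2}\lesssim\|\psi-\psi_{h}\|_{\mathcal{E}}^{2}$. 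This is the first assertion, with $C_{7}$ the reciprocal of the hidden constant.

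Next I would handle the combined indicator. The $\eta_{\psi,T}$ contribution is already controlled by the previous step. For $\eta_{u,T}$, estimate (\ref{year31}) bounds the interior residual and (\ref{year34}) bounds the edge jumps, producing the desired terms $|u-u_{h}|_{1,T}^{2}$ and $|u-u_{h}|_{1,\omega_{E}}^{2}$ together with cross terms of the form $h_{T}^{2}\|\psi-\psi_{h}\|_{T}^{2}$ and $h_{E}^{2}\|\psi-\psi_{h}\|_{\omega_{E}}^{2}$. Since $\Omega$ is bounded, $h_{T},h_{E}\lesssim{\rm diam}(\Omega)$, so these cross terms are dominated by $\|\psi-\psi_{h}\|^{2}\le\|\psi-\psi_{h}\|_{\mathcal{E}}^{2}$. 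Summing over $T$ with finite overlap gives $\sum_{T}\eta_{u,T}^{2}\lesssim|u-u_{h}|_{1}^{2}+\|\psi-\psi_{h}\|_{\mathcal{E}}^{2}$. Adding this to the bound on $\sum_{T}\eta_{\psi,T}^{2}$ and recalling $\|(\psi-\psi_{h},u-u_{h})\|^{2}=\|\psi-\psi_{h}\|_{\mathcal{E}}^{2}+|u-u_{h}|_{1}^{2}$ produces the second assertion.

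The substantive work has already been carried out in the bubble-function arguments of Lemmas~\ref{year29}--\ref{year32}; what remains is largely bookkeeping. The only points demanding care are the finite-overlap counting—controlling how many edge patches $\omega_{E}$ a fixed element can meet, a bound depending solely on the mesh shape regularity and $d$—and the absorption of the cross terms $h_{T}\|\psi-\psi_{h}\|$ and $h_{E}\|\psi-\psi_{h}\|$, which must be folded into the $\psi$-error in the energy norm via boundedness of the mesh size rather than treated as genuinely higher-order data oscillation. I expect no serious obstacle beyond organizing these sums consistently across the two weighted norms.
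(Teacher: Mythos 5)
Your proposal is correct and follows essentially the same route as the paper, which simply sums the local bounds of Lemmas~\ref{year29} and~\ref{year32} over the mesh; you merely make explicit the finite-overlap counting and the absorption of the cross terms $h_{T}\|\psi-\psi_{h}\|_{T}$ and $h_{E}\|\psi-\psi_{h}\|_{\omega_{E}}$ into $\|\psi-\psi_{h}\|_{\mathcal{E}}$ via boundedness of the mesh size, details the paper leaves implicit.
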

\begin{proof}
Summing (\ref{year30}) and (\ref{year33}) over all $T\in\mathcal{T}_{h}$, we obtain the first estimate. Similarly,
we get the second one.
\end{proof}

\begin{theorem}
Let $(\psi,u)\in H^{1}(\Omega)\times H_{0}^{1}(\Omega)$ and
$(\psi_{h},u_{h})\in V_{h}\times V_{h}^{0}$ be the solutions to
(\ref{discrete mixed formulation P2}) and (\ref{mixed form P2}),
respectively. Then there exist positive constants $C_{9}$ and
$C_{10}$, independent of the mesh-size function $h$ and
$\varepsilon$, such that
\begin{equation*}
C_{9}\displaystyle\Big\{\sum\limits_{T\in\mathcal{T}_{h}}\eta_{\psi,T}^{2}\Big\}^{1/2}\leq\|\psi-\psi_{h}\|_{\mathcal{E}}
\end{equation*}
and
\begin{equation*}
C_{10}\displaystyle\Big\{\sum\limits_{T\in\mathcal{T}_{h}}\eta_{\psi,T}^{2}+\eta_{u,T}^{2}\Big\}^{1/2}\leq\|(\psi-\psi_{h},u-u_{h})|\|.
\end{equation*}
\end{theorem}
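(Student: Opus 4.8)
The plan is to observe that this efficiency bound for problem $P_2$ requires essentially no new work beyond the local estimates already in hand, because the element indicators $\eta_{\psi,T}$ and $\eta_{u,T}$ are built from exactly the same interior residual and edge/face jump quantities in both $P_1$ and $P_2$. First I would verify that the four local efficiency estimates of Lemmas \ref{year29} and \ref{year32}, namely (\ref{year30}), (\ref{year31}), (\ref{year33}) and (\ref{year34}), hold verbatim for the present solutions $(\psi,u)$ and $(\psi_h,u_h)$. The crucial point is that their proofs invoke only the strong interior equations $-\varepsilon^{2}\triangle\psi+\psi=f$ and $-\triangle u=\psi$ (which are identical in $P_1$ and $P_2$) together with the element bubble $\psi_T$ and the edge/face bubble $\psi_{E,\theta_E}$. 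Since these bubble functions are supported in a single element or a single edge/face patch and vanish on the boundary of that support, every integration by parts produces no boundary contribution, and the global boundary condition, $\psi\in H_0^1(\Omega)$ for $P_1$ versus $\psi\in H^1(\Omega)$ for $P_2$, never enters. Hence the four local bounds carry over unchanged.

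Second, I would sum the two $\psi$-estimates over the mesh. Squaring (\ref{year30}) and summing over $T\in\mathcal{T}_h$, squaring (\ref{year33}) and summing over the interior edges/faces $E\in\mathcal{E}_h^0$, and then using the bounded overlap of the patches $\omega_E$ guaranteed by the shape regularity (\ref{PDE4}), I would obtain $\sum_T \eta_{\psi,T}^2 \lesssim \|\psi-\psi_h\|_{\mathcal{E}}^2$, which is the first asserted inequality with a suitable $C_9$.

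Third, for the combined measure I would treat the $u$-indicator analogously. Squaring and summing (\ref{year31}) and (\ref{year34}) yields $\sum_T \eta_{u,T}^2 \lesssim |u-u_h|_1^2 + \sum_T h_T^2\|\psi-\psi_h\|_T^2$; since $h_T$ is bounded by the diameter of $\Omega$, one has $\sum_T h_T^2\|\psi-\psi_h\|_T^2 \lesssim \|\psi-\psi_h\|^2 \le \|\psi-\psi_h\|_{\mathcal{E}}^2$, so that $\sum_T \eta_{u,T}^2 \lesssim |u-u_h|_1^2 + \|\psi-\psi_h\|_{\mathcal{E}}^2$. Adding this to the bound for $\sum_T \eta_{\psi,T}^2$ gives $\sum_T(\eta_{\psi,T}^2+\eta_{u,T}^2)\lesssim \|\psi-\psi_h\|_{\mathcal{E}}^2 + |u-u_h|_1^2 = \|(\psi-\psi_h,u-u_h)\|^2$, which is the second inequality with a suitable $C_{10}$.

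This argument is essentially identical to the efficiency proof already given for $P_1$, so I do not expect a genuine obstacle. The only point that deserves care—and really the whole content of the proof—is the first step: confirming that Lemmas \ref{year29} and \ref{year32} are insensitive to the boundary data. This is exactly where the locally supported bubble-function technique pays off, in contrast to the reliability analysis of Section 4, where the lack of boundary information on $\psi$ forced the introduction of the approximation device of Lemma \ref{Julylemma3}.
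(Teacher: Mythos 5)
Your proposal is correct and follows essentially the same route as the paper, whose proof of this theorem simply invokes Lemmas \ref{year29} and \ref{year32} and sums the local bounds; your additional verification that those lemmas are insensitive to the boundary condition on $\psi$ (because the bubble-function arguments are purely local and use only the interior equations) is exactly the implicit content of the paper's one-line proof. The handling of the extra term $h_T\|\psi-\psi_h\|_T$ via $h_T\lesssim \mathrm{diam}(\Omega)$ and $\|\psi-\psi_h\|\le\|\psi-\psi_h\|_{\mathcal{E}}$ is also consistent with what the paper does.
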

\begin{proof}
These two estimates follow from Lemmas \ref{year29} and \ref{year32}.
\end{proof}

\begin{remark}
Theorems 4.1 and 5.3 (and Theorem 4.2 and 5.4) indicate that the
ratios between the upper and lower bounds, i.e., $C_{1}/C_{7}$ and
$C_{3}/C_{8}$ (and $C_{4}/C_{9}$ and $C_{6}/C_{10}$), do not depend
on the singular perturbation parameter $\varepsilon$. Therefore, the
estimators developed in this paper are fully robust with respect to
$\varepsilon$. This further implies that each component of the new
measure of the error is balanced with respect to the perturbation
parameter.
\end{remark}

\section{Numerical experiments}
In this section, we test our {\it a posteriori} error estimators on
two model problems. Note that all programs were developed by
ourselves.
\subsection{Example one}
Consider problem (\ref{PDE1}) and (\ref{Boundary condition2}) on the unit square $\Omega=(0,1)\times(0,1)$.
We suppose the exact solution of this model has the form
\begin{equation*}
u(x,y)=\displaystyle256(x^{2}+\varepsilon^{2}(1-\exp(-{x}/{\varepsilon}))^{2})(x-1)^{2}y^{2}(y-1)^{2}.
\end{equation*}
The function $u$ has a boundary layer, which varies significantly near $x=0$.

\begin{figure}[t]
    \centering
    \includegraphics[width=4.5in]{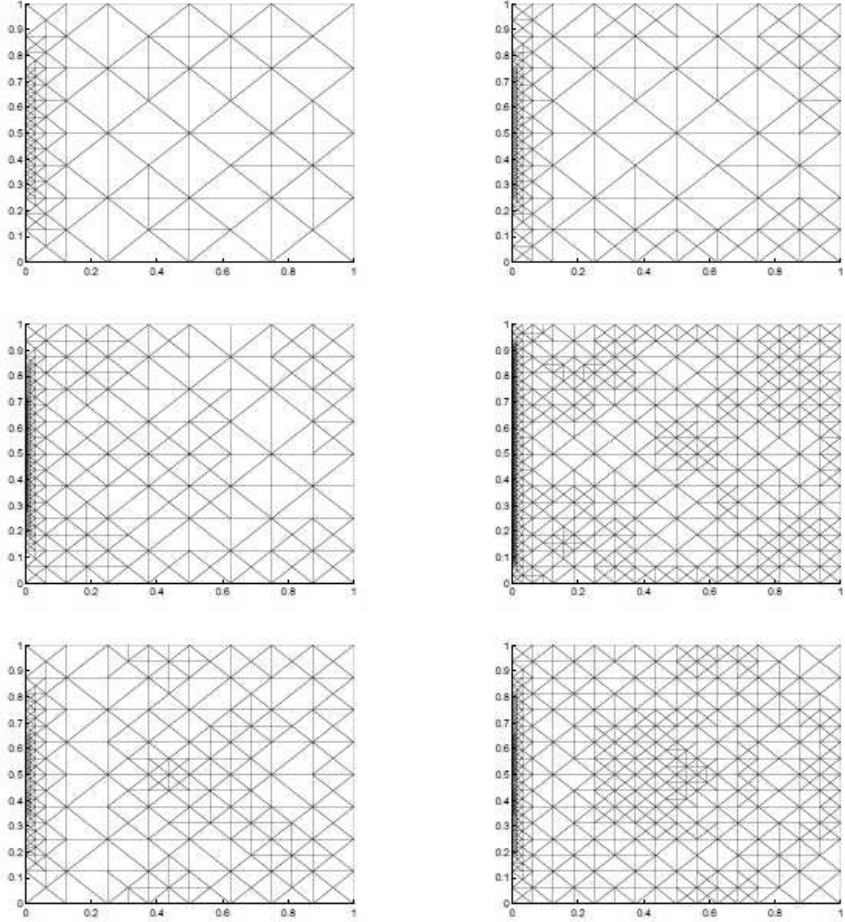}
    \caption{Upper: The mesh after 10 iterations with 214 triangles (left) and the mesh after 12 iterations with 402 triangles (right). Middle: The mesh after 14 iterations with 727 triangles (left) and the mesh after 16 iterations with 4470 triangles (right). These four plots depict the elementwise indicator $\eta_{\psi,T}$. Lower: The mesh after 10 iterations with 478 triangles (left) and the mesh after 12 iterations with 1010 triangles (right), generated by the elementwise indicator $\eta_{\psi,T}+\eta_{u,T}$. Here $\varepsilon=10^{-6}$ and $\theta=0.3$ for all plots.}\label{Fig1}
\end{figure}

Our initial mesh consists of eight isosceles right triangles. We
employ D\"{o}rfler marking strategy \cite{Dorfler} with the marking
parameter $\theta=0.3$ and use the ``longest edge" refinement
\cite{Rivara1} to obtain an admissible mesh. Plots in
Figure~\ref{Fig1} depict the estimators of
$\|\psi-\psi_{h}\|_{\mathcal{E}}=\big(\varepsilon^{2}\|\nabla(\psi-\psi_{h})\|^{2}+\|\psi-\psi_{h}\|^{2}
\big)^{1/2}$ (upper and middle), and
$\|(\psi-\psi_{h},u-u_{h})\|=\big(\|\psi-\psi_{h}\|_{\mathcal{E}}^{2}+\|\nabla(u-u_{h})\|^{2}
\big)^{1/2}$ (lower), respectively. We observe that strong mesh
refinement near the line $x=0$, which indicates the estimators of
the errors $\|\psi-\psi_{h}\|_{\mathcal{E}}$ and
$\|(\psi-\psi_{h},u-u_{h})\|$ capture boundary layers well.

\begin{figure}[t]
    \centering
    \includegraphics[width=5in]{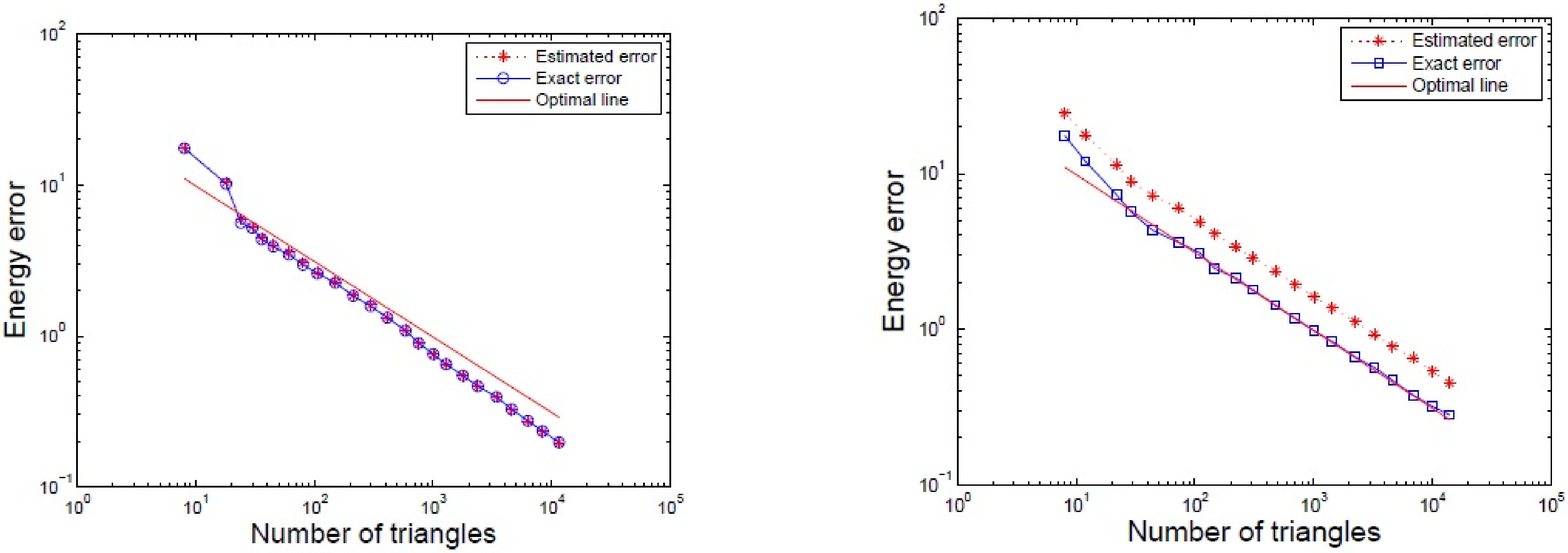}
    \caption{Approximations for $u$ (left) and  $\psi=-\triangle u$ (right) on an adaptively refined
    mesh with 6094 triangles, which are generated by the elementwise indicator $\eta_{\psi,T}$.
    Here $\varepsilon=10^{-6}$ and $\theta=0.3$.}\label{Fig4}
\end{figure}

Figure \ref{Fig4} demonstrates finite element approximations to $u$
(left) and $\psi=-\triangle u$ (right). It is observed that the
function $u$ doesn't possess layer, and that $\psi$ has boundary
layer near $x=0$. On the other hand, the upper two plots of Figure
\ref{Fig6} display the estimated and exact errors for
$\|\psi-\psi_{h}\|_{\mathcal{E}}$ (left) and
$\|(\psi-\psi_{h},u-u_{h})\|$ (right), respectively. It is observed
that the estimated convergence curve overlaps the curve of
$\|\psi-\psi_{h}\|_{\mathcal{E}}$, which indicates that the
estimator for $\|\psi-\psi_{h}\|_{\mathcal{E}}$ is asymptotically
exact even for very small $\varepsilon$. We also observe that the
estimated convergence curve is parallel to the curve
$\|(\psi-\psi_{h},u-u_{h})\|$ independent of $\varepsilon=10^{-6}$,
and both curves decrease in optimal rates. Note that the
study of convergence and optimality of adaptive algorithms is
still in its infancy, and has been carried out mainly for standard
adaptive finite element method for general second order elliptic
problems; see, e.g., \cite{BW85, BV96, Carstensen;Rabus, Chen;Holst;Xu, CN00, DK16, Du, Morin;Nochetto;Siebert}.
\begin{figure}[t]
    \centering
    \includegraphics[width=5in]{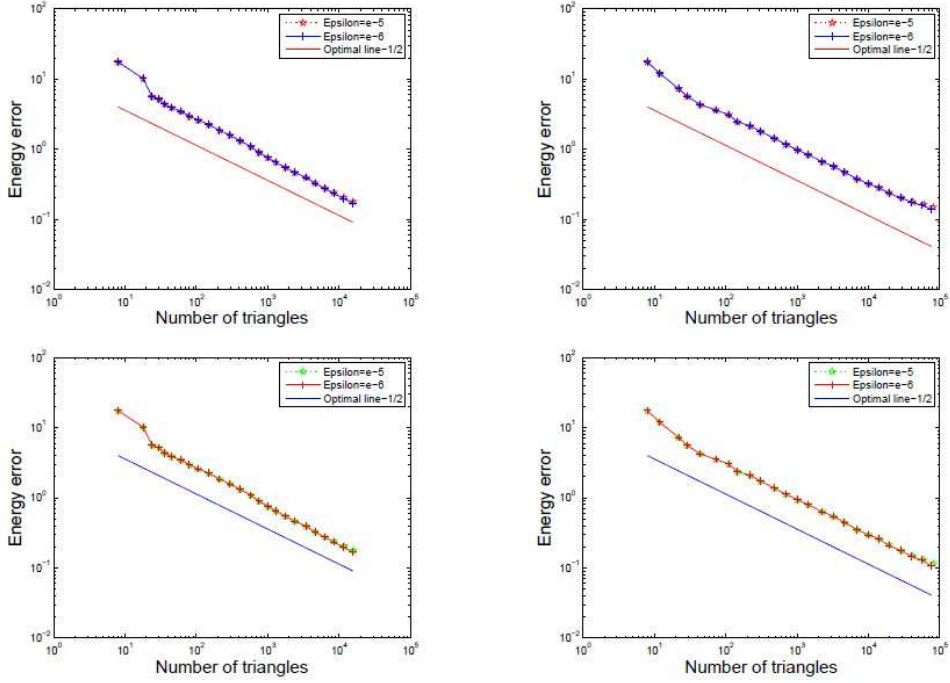}
    \caption{Upper: Estimated and exact errors of $\|\psi-\psi_{h}\|_{\mathcal{E}}$ (left) and $\|(\psi-\psi_{h},u-u_{h})\|$ (right) against
    the number of elements in adaptively refined meshes for
    $\varepsilon=10^{-6}$. Lower: Exact errors of $\|\psi-\psi_{h}\|_{\mathcal{E}}$ (left) and
    $\|(\psi-\psi_{h},u-u_{h})\|$ (right)
    against the number of elements in adaptively refined meshes for $\varepsilon=10^{-5}$ and $\varepsilon=10^{-6}$. Here
    the marking parameter $\theta=0.3$.}\label{Fig6}
\end{figure}

The two lower plots of Figure \ref{Fig6} depict error curves for
$\|\psi-\psi_{h}\|_{\mathcal{E}}$ (left) and
$\|(\psi-\psi_{h},u-u_{h})\|$ (right), respectively. It is observed
that the convergence curves for $\varepsilon=10^{-5}$ and
$\varepsilon=10^{-6}$ are consistent, which indicates that the
errors reduce uniformly with respect to $\varepsilon$. In addition,
we include in Figure \ref{Fig6} an optimal theoretical convergence line with slope $-1/2$.
The plots indicate that $\|\psi-\psi_{h}\|_{\mathcal{E}}$ and
$\|(\psi-\psi_{h},u-u_{h})\|$ decrease in the optimal convergence
rates.

Tables \ref{Tab:EgOne1} and \ref{Tab:Egone2} show some
results of the actual errors $\|\psi-\psi_{h}\|_{\mathcal{E}}$ and
$\|(\psi-\psi_{h},u-u_{h})\|$, the {\it a posteriori} indicators
$\eta_{\psi}$ and $\eta_{\psi}+\eta_{u}$, and the effectivity
indexes eff-index$_{\psi}$ for $\psi$ and eff-index$_{\psi+u}$ for
$(\psi,u)$ for Example 1, where
eff-index$_{\psi}=\eta_{\psi}/\|\psi-\psi_{h}\|_{\mathcal{E}}$,
eff-index$_{\psi+u}=(\eta_{\psi}+\eta_{u})/\|(\psi-\psi_{h},u-u_{h})$. 
It is observed that the effectivity indices of the error $\|\psi-\psi_{h}\|_{\mathcal{E}}$ 
are close to 1, and that the effectivity indices of the error
$\|\psi-\psi_{h}\|_{\mathcal{E}}$ are about 1.5. This suggests that
our estimators are robust with respect to $\varepsilon$.

\begin{table}[t]\small
 \begin{center}
        \caption{Example 1: $k$ -- number of iterations; $\eta_{\psi}$ -- numerical result of estimated error
        for $\|\psi-\psi_{h}\|_{\mathcal{E}}$; eff-index$_{\psi}$ -- the corresponding effectivity index for $\psi$ (the ratio of estimated
        and exact errors).
        Here $\varepsilon=10^{-5}$, $\theta=0.5$.} \label{Tab:EgOne1}
        \small 
        \begin{tabular}{|c|c|c|c|c|c|c|c|c|} \hline
            $k$& $2$& $4$& $6$& $8$& $10$& $12$& $14$& $16$\\ \hline
            $\|\psi-\psi_{h}\|_{\mathcal{E}}$&10.170&5.1837&3.6752&2.1023&1.1593&0.6290&0.3374&0.1911\\ \hline
            $\eta_{\psi}$&10.400&5.2507&3.7688&2.1295&1.1557&0.6286&0.3336&0.2158\\ \hline
            eff-index$_{\psi}$&1.0226&1.0129&1.0249&1.0130&0.9969&0.9914&0.9887&1.1290\\ \hline
        \end{tabular}
    \end{center}
\end{table}
\begin{table}[t]\small
 \begin{center}
        \caption{Example 1: $\eta_{\psi}+\eta_{u}$ -- numerical result of estimated error
        for $\|(\psi-\psi_{h},u-u_{h})\|$ (is denoted by err$_{\psi+u}$); eff-index$_{\psi+u}$ -- the
        corresponding effectivity index for $(\psi,u)$.
        Here $\varepsilon=10^{-5}$, $\theta=0.5$.}
        \label{Tab:Egone2}
        \small 
        \begin{tabular}{|c|c|c|c|c|c|c|c|c|} \hline
            $k$& $1$& $3$& $5$& $7$& $9$& $11$& $13$& $15$\\ \hline
            err$_{\psi+u}$&17.559&7.3066&3.6330&1.9267&0.9408&0.5028&0.3040&0.1808\\ \hline
            $\eta_{\psi}+\eta_{u}$&24.481&11.2702&5.9308&2.9577&1.5630&0.8571&0.4748&0.2999\\ \hline
            eff-index$_{\psi+u}$&1.3942&1.5425&1.6325&1.5351&1.6613&1.7045&1.5617&1.6583\\ \hline
        \end{tabular}
    \end{center}
\end{table}

\subsection{Example two}
This model is taken from \cite{Han}. Consider (\ref{PDE1})-(\ref{Boundar condition1}) on the unit square
$\Omega=(0,1)\times(0,1)$ with the source term
\begin{equation*}
f(x,y)=2\pi^{2}(1-\cos2\pi x\cos2\pi y).
\end{equation*}
Although the exact solution of this model problem is unknown, we know that the exact solution $u$ has
four sharp boundary layers near the boundary.

\begin{figure}[t]
    \centering
    \includegraphics[width=4.5in]{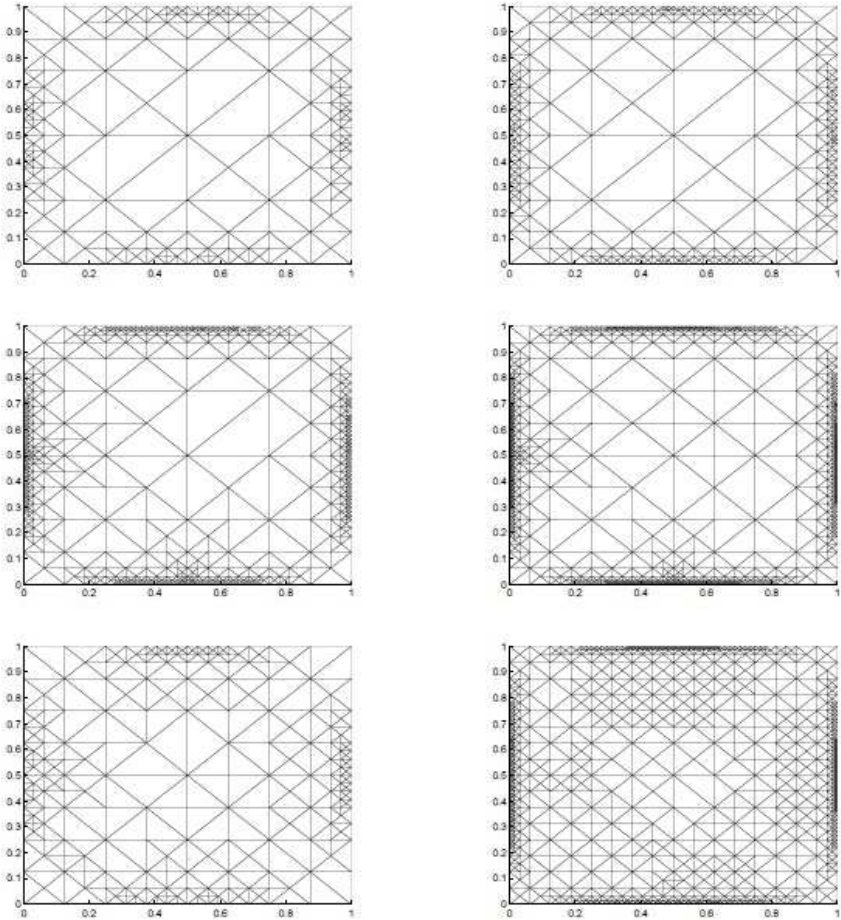}
    \caption{Upper: The mesh after 10 iterations with 344 triangles (left) and the mesh
    after 12 iterations with 611 triangles (right), generated by the elementwise
    indicator $\eta_{\psi,T}$. Middle: The mesh after 14 iterations with 1136 triangles (left) and the mesh
    after 16 iterations with 2109 triangles (right), generated by
    the elementwise indicator $\eta_{\psi,T}$. Lower: The mesh after 8 iterations with 418 triangles (left) and the mesh
    after 12 iterations with 1936 triangles (right), generated by
    the elementwise indicator $\eta_{\psi,T}+\eta_{u,T}$. Here $\varepsilon=10^{-7},\theta=0.3$.}\label{Fig9}
\end{figure}

We choose the same initial mesh as in Example one and set the
marking parameter $\theta=0.3$. The upper and middle four plots
of Figure \ref{Fig9} show the mesh generated by the estimator of
$\|\psi-\psi_{h}\|_{\mathcal{E}}$ after 10, 12, 14, and
16 iterations, and the lower two plots show the mesh by the estimator of $\|(\psi-\psi_{h},u-u_{h})\|$ after 8 and
12 iterations. It is observed that the estimators of
$\|\psi-\psi_{h}\|_{\mathcal{E}}$ and $\|(\psi-\psi_{h},u-u_{h})\|$
capture the layers well, and that the refinement concentrates around
four sharp boundary layers. This indicates that our estimators recognize
the behavior of the solution well, even when the singularly
perturbed parameter is very small.

Figure \ref{Fig10} reports the finite element approximation to $u$
(left) and $\psi=-\triangle u$ (right), respectively. Notice that
the immediate variable $\psi$ has four sharp boundary layers, and
that the primal variable $u$ does not have layer.
\begin{figure}[t]
    \centering
    \includegraphics[width=5in]{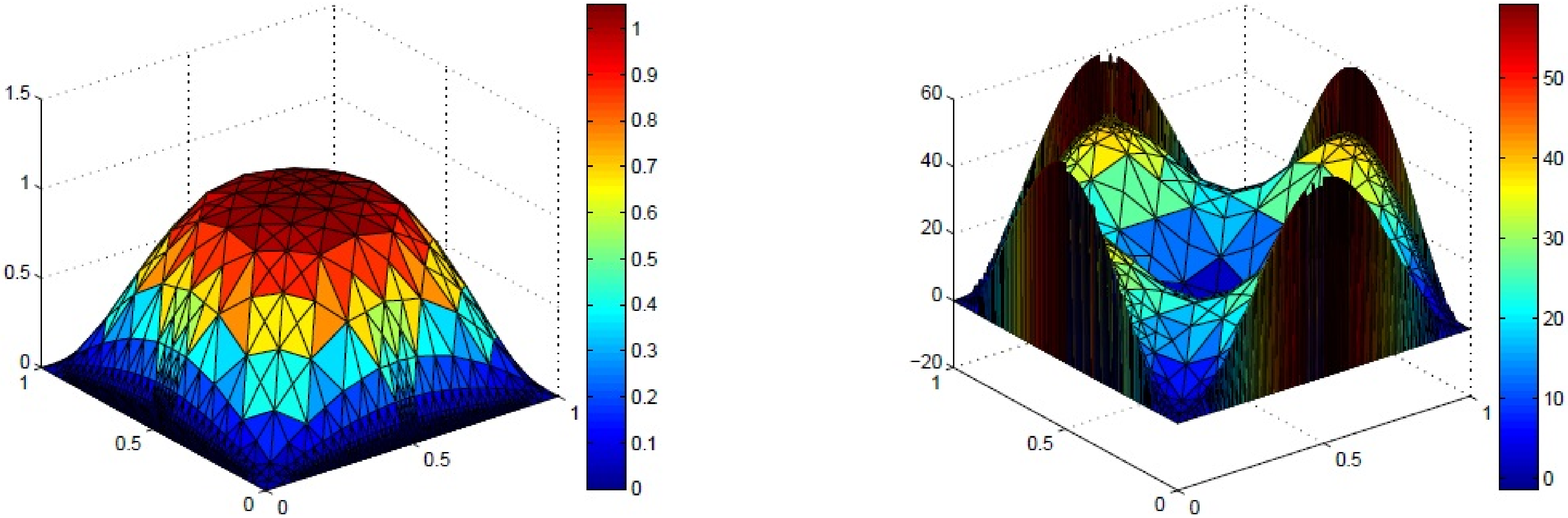}
    \caption{Approximations for $u$ (left) and
    $\psi=-\triangle u$ (right), respectively, on an adaptively
    refined mesh with 9355 triangles, which are generated by
    the elementwise indicator $\eta_{\psi,T}$. Here $\varepsilon=10^{-7}$ and $\theta=0.3$.}\label{Fig10}
\end{figure}
\begin{figure}[t]
    \centering
    \includegraphics[width=5in]{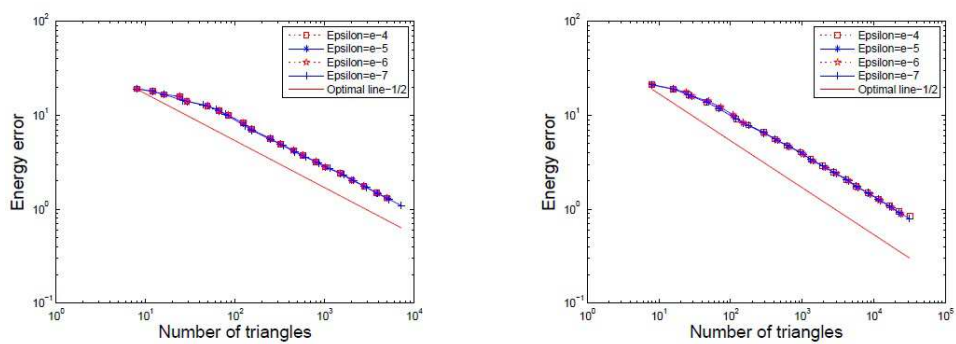}
    \caption{$\|\psi-\psi_{h}\|_{\mathcal{E}}$ (left) and
    $\|(\psi-\psi_{h},u-u_{h})\|$ (or $|u-u_{h}|_{1}$) (right) against
    the number of elements in adaptively refined meshes for $\varepsilon$
    from $\varepsilon=10^{-4}$ to $\varepsilon=10^{-7}$, where the marking parameter $\theta=0.3$.}\label{Fig11}
\end{figure}

\begin{table}[t]\small
       \begin{center}
        \caption{Example 2: TOL -- given tolerance, $k$ -- number of iterations; $\eta_{k}$ -- numerical result of estimated error for
$\|\psi-\psi_{h}\|_{\mathcal{E}}$, DOF-- degrees of freedom,
        $h_{\rm min}(\varepsilon)$-- smallest mesh size.
        Here $\varepsilon=10^{-5}$ and $\theta=0.4$.}
        \label{Egtwo1}
        \small 
        \begin{tabular}{|c|c|c|c|c|c|c|c|} \hline
            ${\rm TOL}$& $20$& $10$& $5$& $2.5$& $1.25$& $0.625$& $0.3125$\\ \hline
            $k$& $1$& $6$& $9$& $12$& $15$& $19$& $22$\\ \hline
            $\eta_{k}$&19.200&9.9911&4.0486&2.2030&1.1698& $0.5093$& $0.2754$\\ \hline
            ${\rm DOF}$&9&56&278&1041&5243&$19062$&$67485$\\ \hline
            $h_{\rm min}(\varepsilon)$&0.500&0.1250&1.56e-02&3.91e-03&1.38e-03&$2.44e-04$& $6.10e-05$\\ \hline
        \end{tabular}
       \end{center}
\end{table}

Table \ref{Egtwo1} reports the given tolerance TOL,
the number of iterations $k$, the estimated error ($\eta_{k}$) for
$\|\psi-\psi_{h}\|_{\mathcal{E}}$, the degrees of freedom DOF, the
smallest mesh size $h_{\rm min}(\varepsilon)$ for example 2, which show that the
required DOF depends on both TOL and $\varepsilon$, and that the layer
is gradually resolved, because the smallest mesh size $h_{\rm
min}(\varepsilon)$ has arrived at the magnitude of $\varepsilon$
after 22 iterations.

Figure \ref{Fig11} shows the estimated errors of
$\|\psi-\psi_{h}\|_{\mathcal{E}}$ (left) and
$\|(\psi-\psi_{h},u-u_{h})\|$ (or $|u-u_{h}|_{1}$) (right),
respectively. We observe again that the estimated errors reduce
uniformly with respect to $\varepsilon$ in both norms with almost
optimal rate $-1/2$.


\begin{thebibliography}{99}
\bibitem{ADAMS}
R.A. Adams, {\it Sobolev spaces.} Academic Press, New York. 1975.

\bibitem{AWANOU}
G. Awanou, {\it Robustness of a spline element method with constraints.} J. Sci. Comput., 36 (2008), pp. 421-432.

\bibitem{BABUSKA}
I. Babu\v{s}ka, J. Osborn, and J. Pitk\"{a}ranta, {\it Analysis of mixed methods using mesh dependent norms.} Math. Comput., 35 (1982), pp. 1039-1062.

\bibitem{BW85}
R.E. Bank and A. Weiser, {\it Some a posteriori error estimators for elliptic partial differential equations.} Math. Comp. 44 (1985), pp. 283-301.

\bibitem{BEIRAO}
L. Beir\~{a}o da Veign, J. Niiranen, and R. Stenberg, {\it A posteriori error estimates for Morley element plate bending element.} Numer. Math., 106 (2007), pp. 165-179.

\bibitem{BV96}
D. Braess and R. Verf\"{u}rth, {\it A posteriori error estimators for the Raviart-Thomas element.} SIAM J. Numer. Anal. 33 (1996), pp. 2431-2444.

\bibitem{BGS10}
S.C. Brenner, T. Gudi, and L. Sung, {\it An a posteriori error estimator for a quadratic $C^0$-interior penalty method for the biharmonic problem.} IMA J. Numer. Anal. 30 (2010), pp. 777-798.

\bibitem{BREZZI}
F. Brezzi and M. Fortin, {\it Mixed and hybrid finite element methods.} Springer, Berlin, 1991.

\bibitem{Carstensen;Rabus}
C. Carstensen and H. Rabus, {\it An optimal adaptive mixed finite element method.} Math. Comput.,  80 (2011), pp. 649-667.

\bibitem{CH}
A. Charbonneau, K. Dossou, and R. Pierre, {\it A residual-based a posteriori error estimator for the Ciarlet-Raviart formulation of the first biharmonic problem.} Numer. Methods Partial Differential Equations, 13 (1997), pp. 93-111. 

\bibitem{Chen;Holst;Xu}
L. Chen, M. Holst, and J. Xu, {\it Convergence and optimality of adaptive mixed finite element methods.} Math. Comput., 78 (2009), pp. 35-53. 

\bibitem{CN00}
Z. Chen and R.H. Nochetto, {\it Residual type a posteriori error estimates for elliptic obstacle problems.} Numer. Math. 84 (2000), pp. 527-548.

\bibitem{Ciarlet}
P. G. Ciarlet, {\it The finite element method for elliptic problems.} North-Holland, Amsterdam, 1978.

\bibitem{Clement}
P. Clem\'{e}nt, {\it Approximation by finite element functions using local regularization.} RAIRO S\'{e}r. Rouge Anal. Num\'{e}r., 2, 77-84, 1975.

\bibitem{DANUMJAYA}
P. Danumjaya and A.K. Pani, {\it Mixed finite element methods for a fourth order reaction diffusion equation.} Numer. Meth. Part. D. E., 28 (2012), pp. 1227-1251.

\bibitem{DK16}
A. Demlow and N. Kopteva, {\it Maximum-norm a posteriori error estimates for singularly perturbed elliptic reaction-diffusion problems.} Numer. Math. 133 (2016), pp. 707-742.

\bibitem{Dorfler}
W. D\"{o}rfler, {\it A convergent adaptive algorithm for Poisson's equation.} SIAM J. Numer. Anal., 33 (1996), pp. 1106-1124.

\bibitem{Du}
S.H. Du and X.P. Xie, {\it Convergence of adaptive mixed finite element method for convection-diffusion-reaction equations.} Sci. China Math., 58 (2015), pp. 1327-1348.

\bibitem{FALK}
R.S. Falk and J.E. Osborn, {\it Error estimates for mixed methods.} M2AN Math. Model. Numer. Anal., 14 (1980), pp. 249-277.

\bibitem{FAN}
L. S. Fank, {\it Singular perturbations in Elasticity Theory, Analysis and its Applications.} 1., IOS press, Amsterdam, 1997.

\bibitem{GRISVARD}
P. Grisvard, {\it Elliptic probelms on nonsmooth domains.} Pitman Publishing INC., Marshfield, Massachusetts, 1985.

\bibitem{GU}
T. Gudi, {\it Residual-based a posteriori error estimator for the mixed finite element approximation of biharmonic equation.} Numer. Methods Partial Differential Equations, 27 (2011), pp. 315-328. 

\bibitem{Guzman}
J. Guzm\'{a}n, D. Leykekhman, and M. Neilan, {\it A family of non-conforming elements and the analysis of Nitsche's method for a singularly perturbed fourth order problem.} Calcolo, 49 (2012), pp. 95-125. 

\bibitem{Han}
H. Han and Z. Huang, {\it An equation decomposition method for the numerical solution of a fourth-order elliptic singular perturbation problem.} Numer. Methods Partial Differential Equations, 28 (2012), pp. 942-953.

\bibitem{HU}
J. Hu and Z.C. Shi, {\it A new a posteriori error estimate for the Morley element.} Numer. Math., 112 (2009), pp. 25-40.

\bibitem{Lin}
R. Lin and M. Stynes, {\it A balanced finite element method for singularly perturbed reaction-diffusion problems.} SIAM J. Numer. Anal., 50 (2012), pp.
2729-2743.

\bibitem{Morin;Nochetto;Siebert}
P. Morin, R.H. Nochetto, and K.G. Siebert, {\it Convergence of adaptive finite element methods.} SIAM Review, 44 (2002), pp. 631-658. 

\bibitem{MORLEY}
L.S.D. Morley, {\it The triangular equilibrium element in the solutionof plate bending problems.} Aero. Quart., 19 (1968), pp. 149-169.

\bibitem{NILSSEN}
T.K. Nillssen, X.C. Tai, and R. Winther, {\it A robust nonconforming $H^{2}$-element.} Math. Comp., 70 (2001), pp. 489-505.

\bibitem{Rivara1}
M.C. Rivara, {\it Mesh refinement processes based on the generalized bisection of simplices.} SIAM J. Numer. Anal., 21 (1984), pp. 604-613.

\bibitem{SEMPER}
B. Semper, {\it Conforming finite element approximations for a fourth-order singular perturbation problem.} SIAM J. Numer. Anal., 29 (1992), pp. 1043-1058.

\bibitem{SHI}
Z. Shi, {\it Error estimates of Morley element.} Math. Numer. Sinica, 12 (1990), pp. 113-118.

\bibitem{Verfurth2}
R. Verf\"{u}rth, {\it A posteriori error estimators for convection-diffusion equations.} Numer. Math., 80 (1998), pp. 641-663.

\bibitem{WANG}
M. Wang, {\it The necessity and sufficiency of the patch test for the convergence of noncoforming finite elements.} SIAM J. Numer. Anal., 39 (2001), pp.
363-384.

\bibitem{WANG07}
M. Wang and Y. Hu, {\it A robust finite element method for 3-D elliptic singular perturbation problem.} J Comput. Math., 25 (2007), pp. 631-644.

\bibitem{WANG06}
M. Wang, J. Xu, and Y. Hu, {\it Modified Morley element method for a fourth order elliptic singular perturbation problem.} J Comput. Math., 24 (2006),
pp. 113-120.

\bibitem{WANGZ}
M. Wang and S. Zhang, {\it Local a priori and a posteriori error estimates of finite elements for biharmonic equation.} Research Report, 13 (2006), School of Mathematical Science and Institute of Mathematics, Peking University.

\bibitem{WANG08}
M. Wang and S. Zhang, {\it A posteriori estimators of nonconforming finite element method for fourth order elliptic perturbation problems.} J. Comp. Math., 26 (2008), pp. 554-577.

\bibitem{Zhang}
S. Zhang and Z.M. Zhang, {\it Invalidity of decoupling a biharmonic equation to two Piosson equations on non-convex polygons.} International Journal
of Numerical Analysis and Modeling, 5 (2007), pp. 73-76. 
\end{thebibliography}
\end{document}